\newtheorem{thm}{Theorem}[section]
	\newcommand{\Beta}{\mathrm{B}}
	\newcommand{\assign}{:=}
	\newcommand{\mathd}{\mathrm{d}}
	\newcommand{\tmop}[1]{\ensuremath{\operatorname{#1}}}
	\newenvironment{enumerateroman}{\begin{enumerate}[i.] }{\end{enumerate}}
	\newenvironment{proof*}[1]{\noindent\textit{#1\ }}{\hspace*{\fill}$\Box$\medskip}
	\newtheorem{lemma}[thm]{Lemma}
	\newtheorem{proposition}[thm]{Proposition}
	\newtheorem{theorem}[thm]{Theorem}
	\theoremstyle{definition}
	\newtheorem{remark}[thm]{Remark}
	\newtheorem{definition}[thm]{Definition}
	\newcommand{\tmfloatcontents}{}
	\newlength{\tmfloatwidth}
	\newcommand{\tmfloat}[5]{
		\renewcommand{\tmfloatcontents}{#4}
		\setlength{\tmfloatwidth}{\widthof{\tmfloatcontents}+1in}
		\ifthenelse{\equal{#2}{small}}
		{\setlength{\tmfloatwidth}{0.45\linewidth}}
		{\setlength{\tmfloatwidth}{\linewidth}}
		\begin{minipage}[#1]{\tmfloatwidth}
			\begin{center}
				\tmfloatcontents
				\captionof{#3}{#5}
			\end{center}
	\end{minipage}}
\begin{document}
	
		\title[Log-concavity of inv. inc. beta function wrt parameter]{Logarithmic concavity of the inverse incomplete
	beta function with respect to parameter}
	
	\author{Dimitris Askitis}
	\address{Department of Mathematical Sciences\\
		University of Copenhagen\\
		Universitetsparken 5\\
		Copenhagen 2100\\
		Denmark}
	\email{dimitrios@math.ku.dk}

		\maketitle
		
		\begin{abstract}
			The beta distribution is a two-parameter family of probability distributions
			whose distribution function is the (regularised) incomplete beta function.
			In this paper, the inverse incomplete beta function is studied analytically as univariate
			function of the first parameter. Monotonicity, limit results and convexity
			properties are provided. In particular, logarithmic concavity of the inverse
			incomplete beta function is established. In addition, we provide
			monotonicity results on inverses of a larger class of parametrised
			distributions that may be of independent interest.
		\end{abstract}
		
		\section{Introduction}
		
		Let a probability distribution on $I \subset \mathbbm{R}$ having cumulative
		distribution function (CDF) $F$. A median of it is defined as a point on $I$
		that leaves half of the ``mass'' on the left and half on the right, i.e. a
		value $m \in I$ such that $F (m) = 1 / 2$. In a similar way, we consider the
		more general notion of a $p$-quantile:
		\begin{definition}
			Let a probability distribution on $I\subset \mathbbm{R}$ with cumulative distribution function $F$, and $p
			\in (0, 1)$. A value $q_p \in I$ is a $p$-quantile of it if $F (q) = p$.
		\end{definition}			
		In this notation, the $1 / 2$-quantile is exactly the median. It is not always
		the case that a $p$-quantile exists for a probability distribution, or that it
		is unique. However, existence and uniqueness are guaranteed if $D$ has an a.e.
		positive density wrt Lebesgue measure. In this case, we may consider the
		inverse distribution function of $F$. The median and $p$-quantiles have
		importance in statistics as measures of position less affected by extreme
		values than e.g. the mean, and they have further uses considering levels of
		significance.
		
		We are interested in parametrised families of probability distributions and
		the behaviour of the $p$-quantile with respect to the parameter, with $p$
		being fixed. In case we have a family of cumulative distribution functions
		$F_a$, $a$ being the parameter of the family, such that for each $a$ the
		corresponding $p$-quantile exists and is unique, we may define it as a
		function of $a$ implicitly through the functional equation $F_a (q_p (a)) =
		p$.
		
		In the case of the median of the gamma distribution, such studies have been
		done in several occasions, e.g. in {\cite{gammaram1}}, {\cite{chenrubin0}} and
		{\cite{gammaram2}}. In {\cite{gammaram3}}, Adell and Jodr{\'a} explore a very
		interesting connection with a sequence by Ramanujan. In {\cite{chenrubin}} and
		{\cite{mediangammaconvex}}, Berg and Pedersen give a proof of the continuous
		version of the Chen-Rubin conjecture, originally stated in
		{\cite{chenrubin0}}, and they moreover prove convexity and find asymptotic
		expansions.
		
		In the present article, the main focus is on the $p$-quantile of the beta
		distribution, or equivalently the inverse of the (regularised) incomplete beta
		function (\ref{regincbeta}), as a function of the parameter $a$. This inverse
		has also been considered by Temme {\cite{temme}} who studied its uniform
		asymptotic behaviour. In particular, his results give a very accurate
		approximation for the inverse for $a + b > 5$. This is used in
		computer algorithms approximating the inverse incomplete beta function. Also,
		see {\cite{beta1}} for some interesting inequalities on the median. In \cite{KarpInc}, logarithmic convexity/concavity results are proved for the regularised incomplete beta function wrt to parameters, though the methods employed there are quite different, and there does not seem to be some direct connection with the results in the present article. In applications, (strict) logarithmic concavity is an important property, as it ensures the uniqueness of minimum and it is invariant under taking products.
		
		The beta function is defined as the ratio of gamma functions
		\begin{equation}
		\Beta (a, b) \assign \frac{\Gamma (a) \Gamma (b)}{\Gamma (a + b)}
		\end{equation}
		One also has an integral representation of the beta function for $a, b > 0$
		given by
		\begin{equation}
		\Beta (a, b) = \int_0^1 t^{a - 1} (1 - t)^{b - 1} \mathd t
		\end{equation}
		More information on the beta function can be found on
		{\cite{special_functions}}. The beta distribution is the $2$-parameter family
		of probability distributions, whose cumulative distribution function is the regularised
		incomplete beta function
		\begin{equation}
		I (x ; a, b) \assign \frac{\int_0^x t^{a - 1} (1 - t)^{b - 1} \mathd
			t}{\Beta (a, b)} \label{regincbeta}
		\end{equation}
		We fix $p \in (0, 1)$ and $b > 0$, and we consider the first parameter $a$ as
		a variable. We shall see in the Appendix that, due to a reflection formula for
		the regularised incomplete beta function, we can translate the results to the
		case that we fix the other parameter instead. We consider the $p$-quantile of the beta
		distribution, which in the literature is often also called the inverse incomplete beta function, as a function of $a$. We denote it by $q : (0,
		\infty) \rightarrow (0, 1)$ and define it implicitly by the
		equation $I (q (a) ; a, b) = p$, or equivalently by
		\begin{equation}
		\int_0^{q (a)} t^{a - 1} (1 - t)^{b - 1} \mathd t = p \int_0^1 t^{a - 1} (1
		- t)^{b - 1} \mathd t \label{betamaineq}
		\end{equation}
		In the literature this value is often denoted by $I^{-1}_p(a,b)$, and in our case $q$ is the function $a\mapsto I^{-1}_p(a,b)$. Moreover, we consider the function
		\begin{equation}
		\phi (a) \assign - a \log q (a) \label{phi}
		\end{equation}
		which turns out giving further information on $q$. In the
		following plots we can get an idea on how the median of the beta distribution
		behaves wrt $a$.
	\begin{figure}[H]\centering
		\includegraphics[height=6cm,keepaspectratio]{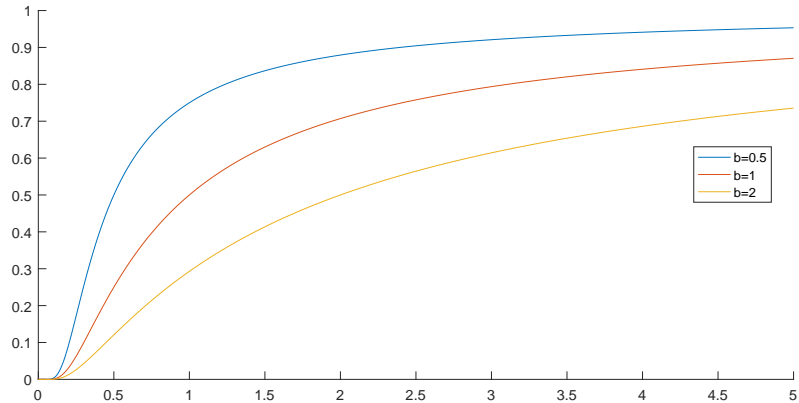}
		\caption{Plot of $q$\label{figq} for $p=1/2$}
	\end{figure}
\begin{figure}[H]\centering
			\includegraphics[height=6cm,keepaspectratio]{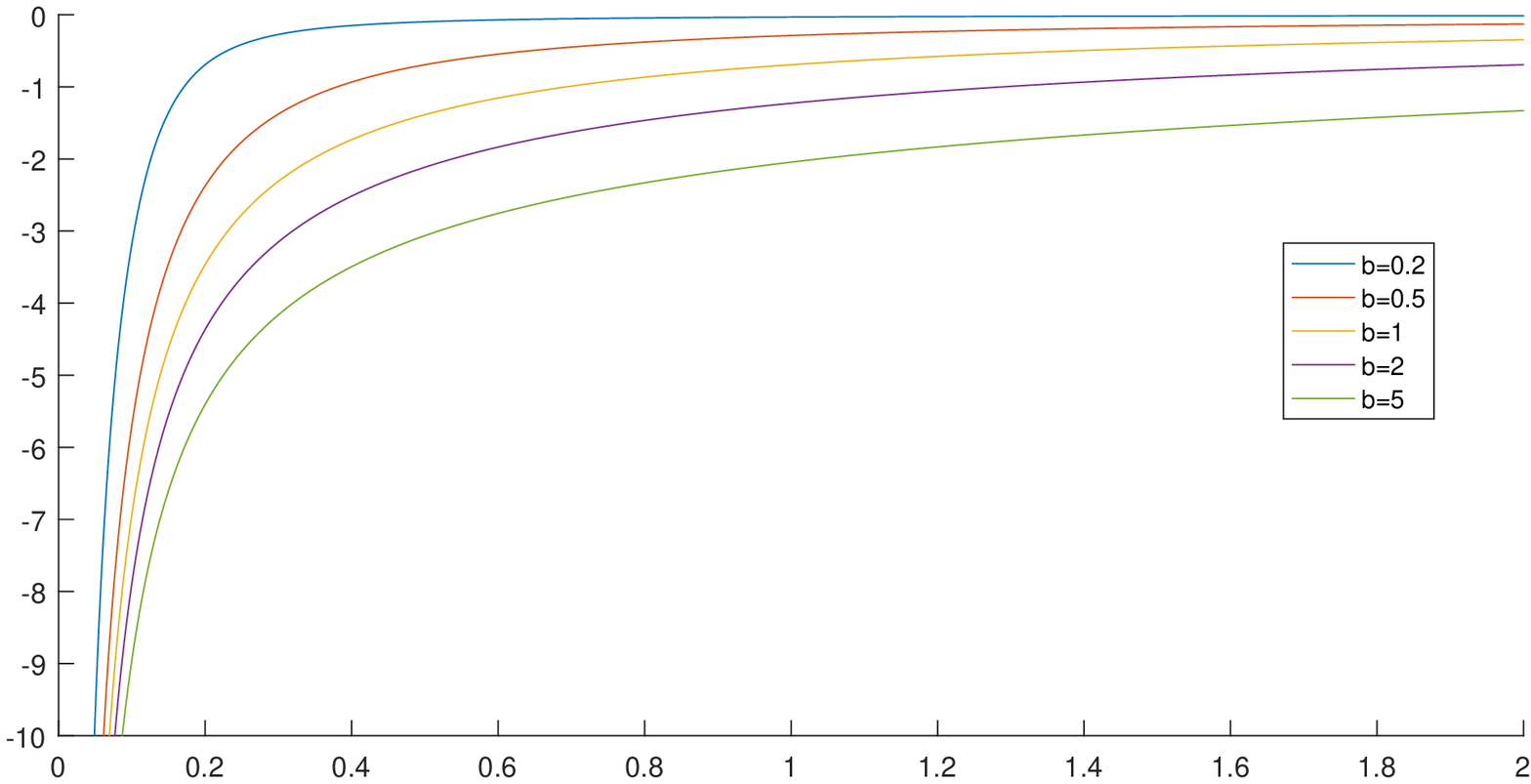}
	\caption{Plot
		of $\log q$ for $p=1/2$}\label{figlog}
\end{figure}
\begin{figure}[H]\centering
			\includegraphics[height=6cm,keepaspectratio]{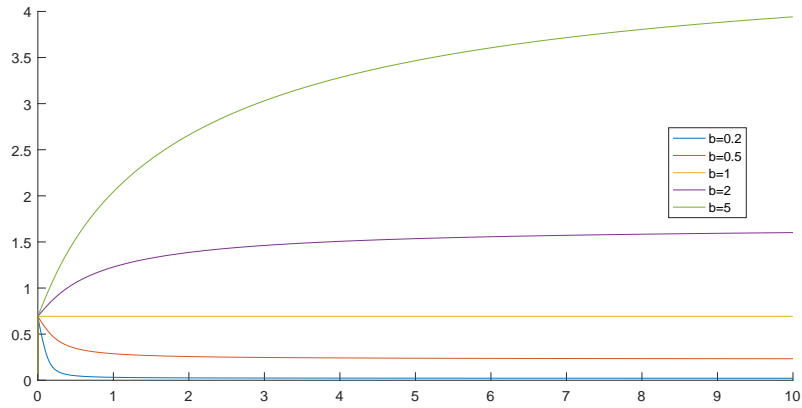}
	\caption{Plot
		of $\phi$ for $p=1/2$}\label{figphi}
\end{figure}
		
		In the rest of the paper we fix $p\in (0,1)$. We first get the following two propositions, regarding monotonicity and first order asymptotics:
		
		\begin{proposition}
			\label{prop1}The function $q$ in (\ref{betamaineq}) is a real
			analytic and increasing function on $(0, \infty)$. It has
			limits
			\[ \lim_{a \rightarrow 0} q (a) = 0 \]
			and
			\[ \lim_{a \rightarrow \infty} q (a) = 1 \]
		\end{proposition}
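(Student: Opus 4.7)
The plan is to handle analyticity and strict monotonicity jointly via the implicit function theorem, and then to establish the two limits through elementary estimates on the integrals defining $I(x;a,b)$.

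For analyticity, I would apply the real-analytic implicit function theorem to
$F(x,a) \assign \int_0^x t^{a-1}(1-t)^{b-1}\,\mathd t - p\,\Beta(a,b)$
on $(0,1)\times(0,\infty)$. The integrand depends analytically on $a$ in a complex neighbourhood of any positive real value, and differentiation under the integral sign (justified by a locally uniform dominating function) shows $F$ is real analytic jointly in $(x,a)$. Since $\partial_x F(x,a) = x^{a-1}(1-x)^{b-1} > 0$ on $(0,1)\times(0,\infty)$, the theorem produces a unique real-analytic solution $q$ locally around any $a_0 > 0$, and uniqueness of the $p$-quantile glues these local branches into a global real-analytic function.

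For monotonicity, implicit differentiation of (\ref{betamaineq}) yields
\[
q'(a)\, q(a)^{a-1}(1-q(a))^{b-1} \;=\; p\int_0^1 t^{a-1}(1-t)^{b-1}\log t\,\mathd t \;-\; \int_0^{q(a)} t^{a-1}(1-t)^{b-1}\log t\,\mathd t.
\]
Writing $\rho_a(t) = t^{a-1}(1-t)^{b-1}/\Beta(a,b)$ for the beta density and using (\ref{betamaineq}) again, the right-hand side divided by $\Beta(a,b)$ rearranges to
$p\int_{q(a)}^1 \log t \cdot \rho_a(t)\,\mathd t - (1-p)\int_0^{q(a)} \log t \cdot \rho_a(t)\,\mathd t$.
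Since $\log t < \log q(a)$ on $(0,q(a))$ (which carries $\rho_a$-mass $p$) and $\log t > \log q(a)$ on $(q(a),1)$ (which carries mass $1-p$), this expression strictly exceeds $p(1-p)\log q(a) - (1-p)p\log q(a) = 0$, so $q'(a) > 0$.

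For the limits, I would fix $\epsilon\in(0,1)$ and estimate $I(\epsilon;a,b)$. As $a\to 0^+$, the tail integral $\int_\epsilon^1 t^{a-1}(1-t)^{b-1}\,\mathd t$ stays bounded by dominated convergence, whereas $\Beta(a,b)\sim 1/a$ (since $\Gamma(a)\sim 1/a$ and $\Gamma(a+b)\to\Gamma(b)$); hence $I(\epsilon;a,b)\to 1$, and monotonicity of $I(\cdot;a,b)$ combined with $I(q(a);a,b)=p$ forces $q(a)\le\epsilon$ for all small $a$. Symmetrically, as $a\to\infty$, the bound $\int_0^{1-\epsilon} t^{a-1}(1-t)^{b-1}\,\mathd t \le (1-\epsilon)^{a-1}/b$ (valid for $a\ge 1$) combined with the asymptotic $\Beta(a,b)\sim \Gamma(b)a^{-b}$ shows $I(1-\epsilon;a,b)\to 0$, hence $q(a)\ge 1-\epsilon$ eventually. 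No individual step looks like a genuine obstacle; the only mildly delicate point is noticing the correct ``compensation'' produced by splitting the log-integral at $t=q(a)$ in the monotonicity argument.
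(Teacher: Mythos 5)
Your proof is correct, and on two of the three claims it takes a genuinely more self-contained route than the paper. For analyticity you do essentially what the paper does: its Lemma \ref{real} is exactly the real-analytic implicit function theorem applied to the quantile equation, so there is no difference there. For monotonicity, the paper invokes its general Lemma \ref{lemmaq}, which rests on the monotone-ratio Lemma \ref{lemmageratio} (proved by a symmetrization/correlation argument): since the logarithmic derivative of the integrand with respect to $a$ is $\log x$, increasing in $x$, the CDF $I(x;a,b)$ is pointwise decreasing in $a$, hence the quantile increases. Your argument instead differentiates (\ref{betamaineq}) and splits the log-integral at $t=q(a)$, comparing $\log t$ with $\log q(a)$ on the two sides of mass $p$ and $1-p$; this is a hands-on instance of the same correlation phenomenon, and your computation is valid (including strictness of $q'>0$). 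One structural difference worth noting: the paper's route does not require $q$ to be differentiable at all --- monotonicity of the CDF in $a$ alone forces monotonicity of the quantile --- whereas your route needs differentiability of $q$ first, so the implicit-function-theorem step is a logical prerequisite of your monotonicity step rather than an independent claim; here that ordering is harmless since you establish analyticity anyway. For the limits, the paper cites the classical pointwise limits of $I(x;a,b)$ and its quantile-convergence Lemma \ref{lemmaasymp}, while you prove the needed CDF estimates by hand ($\Beta(a,b)\sim 1/a$ as $a\to 0$, $\Beta(a,b)\sim\Gamma(b)a^{-b}$ as $a\to\infty$) and sandwich $q(a)$ directly using monotonicity of $I(\cdot\,;a,b)$; both are sound, and yours makes Proposition \ref{prop1} independent of Section 2. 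What the paper's machinery buys is reusability: Lemmas \ref{lemmaq} and \ref{lemmaasymp} are deployed again in the proof of Proposition \ref{prop2}, where the density $e^{-x}(1-e^{-x/a})^{b-1}$ has a less transparent logarithmic derivative, so the general lemmas earn their keep there.
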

		
		\begin{proposition}
			\label{prop2}The function $\phi$ in (\ref{phi}) is real analytic on $(0,
			\infty)$. It is decreasing if $b < 1$, constant if $b = 1$ and increasing if
			$b > 1$. It has limits
			\[ \lim_{a \rightarrow 0} \phi (a) = - \log p \]
			and
			\[ \lim_{a \rightarrow \infty} \phi (a) = \gamma_b \]
			where $\gamma_b$ is the $(1 - p)$-quantile of the gamma distribution with
			parameter $b$.
		\end{proposition}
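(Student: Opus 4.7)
The plan is to reduce Proposition~\ref{prop2} to a monotone-likelihood-ratio argument via a change of variable that turns $\phi(a)$ into the $(1-p)$-quantile of an $a$-dependent probability measure on $(0,\infty)$. Real analyticity of $\phi$ is immediate from Proposition~\ref{prop1}: $q$ is real analytic with values in $(0,1)$, so $\log q$ and hence $\phi(a)=-a\log q(a)$ are real analytic. For the remaining claims I would substitute $t=e^{-v/a}$ in both integrals of (\ref{betamaineq}); after simplification the common factor $1/a$ cancels and the defining equation becomes
\begin{equation}
\int_{\phi(a)}^{\infty} e^{-v}(1-e^{-v/a})^{b-1}\,\mathd v = p\int_0^{\infty} e^{-v}(1-e^{-v/a})^{b-1}\,\mathd v, \label{reform}
\end{equation}
so $\phi(a)$ is the $(1-p)$-quantile of the probability measure $\mu_a$ on $(0,\infty)$ with density proportional to $e^{-v}(1-e^{-v/a})^{b-1}$. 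When $b=1$ the density reduces to $e^{-v}$ for every $a$, giving $\phi\equiv -\log p$ at once.

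For the monotonicity (with $b\ne 1$), take $a_1<a_2$. Up to normalisation, the likelihood ratio of $\mu_{a_2}$ to $\mu_{a_1}$ is $r(v)^{b-1}$, where $r(v)=(1-e^{-v/a_2})/(1-e^{-v/a_1})$; a direct differentiation shows that $(\log r)'(v)$ has the sign of $a_1(e^{v/a_1}-1)-a_2(e^{v/a_2}-1)$. Setting $g(a)\assign a(e^{v/a}-1)$, one computes $g'(a)=e^{v/a}(1-v/a)-1$, which is strictly negative on $(0,\infty)$ because $e^s(1-s)<1$ for all $s>0$. Hence $g$ is strictly decreasing, $r$ is strictly increasing, and $r^{b-1}$ is strictly monotone in $v$ with the sign of $b-1$. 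This yields strict stochastic ordering of $\{\mu_a\}$ in opposite directions for $b>1$ and $b<1$, and the asserted strict monotonicity of $\phi$ follows by comparing $(1-p)$-quantiles.

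The limits follow from (\ref{reform}) by dominated convergence. As $a\to 0^+$, $(1-e^{-v/a})^{b-1}\to 1$ pointwise on $(0,\infty)$, dominated by $1$ when $b\ge 1$ and, for $a\le a_0$ when $b<1$, by $(1-e^{-v/a_0})^{b-1}$ (integrable against $e^{-v}$, since it behaves as $(v/a_0)^{b-1}$ near $0$ and $b>0$); both sides of (\ref{reform}) thus tend to integrals of $e^{-v}$, forcing $\phi(0^+)=-\log p$ (monotonicity rules out a divergent limit, since the right-hand side has positive limit). As $a\to\infty$, I would multiply (\ref{reform}) by $a^{b-1}$ and use $a(1-e^{-v/a})\to v$; since $a\mapsto a(1-e^{-v/a})$ is strictly increasing in $a$ (its derivative $(1-e^{-v/a})-(v/a)e^{-v/a}$ is positive for $v>0$), one dominates $[a(1-e^{-v/a})]^{b-1}$ by $v^{b-1}$ when $b\ge 1$ and by $[a_0(1-e^{-v/a_0})]^{b-1}$ for $a\ge a_0$ when $b<1$. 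The limiting identity $\int_{\phi(\infty)}^\infty e^{-v}v^{b-1}\,\mathd v = p\,\Gamma(b)$ then identifies $\phi(\infty)$ with the $(1-p)$-quantile $\gamma_b$ of the Gamma$(b)$ distribution. The main technical point is the strict monotonicity of $g$ underlying the MLR step; the $b<1$ domination arguments are the other thing to watch, but both are routine once isolated.
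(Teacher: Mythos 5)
Your proof is correct, and it rests on the same pivotal reduction as the paper: the substitution $t=e^{-v/a}$ turning (\ref{betamaineq}) into the paper's equation (\ref{eqphi}), so that $\phi$ becomes the $(1-p)$-quantile of the distribution with density proportional to $e^{-v}(1-e^{-v/a})^{b-1}$. Where you genuinely diverge is in how the two remaining steps are executed. For monotonicity, the paper runs an infinitesimal monotone-likelihood-ratio argument: it computes the logarithmic derivative $\partial_a f/f=-(b-1)xe^{-x/a}/(a^2(1-e^{-x/a}))$, checks its monotonicity in $x$, and invokes its general Lemmas \ref{lemmageratio} and \ref{lemmaq}, which require differentiating under the integral sign with domination hypotheses. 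Your two-point comparison --- the likelihood ratio $r(v)^{b-1}$ with $r(v)=(1-e^{-v/a_2})/(1-e^{-v/a_1})$, reduced to the strict decrease of $g(a)=a(e^{v/a}-1)$ via $e^s(1-s)<1$ --- is the discrete version of the same MLR idea; your computations are correct, and this route is more elementary in that it avoids the differentiation-under-the-integral machinery entirely, at the cost of citing as standard the implication from monotone likelihood ratio to stochastic ordering of the family and hence to ordering of quantiles, which is precisely what the paper's Lemma \ref{lemmaq} proves from scratch. For the limits, the paper appeals to its Lemma \ref{lemmaasymp} (limit points of quantiles under pointwise convergence of distribution functions with continuous limit), whereas you apply dominated convergence directly to the defining integral equation; this forces you to control the moving endpoint $\phi(a)$, which you handle correctly for $a\to 0^+$ (the monotone limit exists and cannot be $+\infty$ because the right-hand side stays bounded away from $0$), and the same one-line remark should be repeated for $a\to\infty$, where it is equally needed and equally true for $b>1$. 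Your explicit dominating functions for the case $b<1$ are also correct, since the $v^{b-1}$-type singularity at $0$ is integrable for $b>0$. In sum: same skeleton and same key change of variables, but your implementations of both the monotonicity and the limit steps are self-contained and arguably more elementary, while the paper's route factors the work through reusable general-purpose lemmas.
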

		
		Then, we investigate the analytic properties of the inverse
		incomplete beta function deeper. In particular, investigating its logarithm, we
		obtain the following two results, which consist the main contribution of this
		paper:
		
		\begin{theorem}
			\label{conv1}For fixed $b \in (0,1)$, $\phi$ in (\ref{phi}) is (strictly) convex.
		\end{theorem}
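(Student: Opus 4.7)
The natural substitution $t = e^{-u/a}$ in equation~(\ref{betamaineq}) transforms it into
\begin{equation*}
\int_0^{\phi(a)} e^{-u}(1-e^{-u/a})^{b-1}\,\mathd u \;=\; (1-p)\int_0^{\infty} e^{-u}(1-e^{-u/a})^{b-1}\,\mathd u,
\end{equation*}
since $t \in (0,q(a))$ corresponds to $u \in (\phi(a),\infty)$ and $\phi(a) = -a\log q(a)$. Writing $h_a(u) = e^{-u}(1-e^{-u/a})^{b-1}$ and $Z(a) = \int_0^\infty h_a(u)\,\mathd u$, this exhibits $\phi(a)$ as the $(1-p)$-quantile of the probability density $h_a/Z(a)$ on $(0,\infty)$. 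The endpoint behaviour of $h_a/Z$ (exponential at $a\to 0$, $\Gamma(b)$ at $a\to\infty$) already matches the asymptotics of Proposition~\ref{prop2} and provides a clean setting for implicit differentiation.

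A direct computation gives the logarithmic derivative
\begin{equation*}
\psi(u,a) \;:=\; \partial_a \log h_a(u) \;=\; \frac{(1-b)\,u}{a^2\,(e^{u/a}-1)},
\end{equation*}
which, for $b \in (0,1)$, is strictly positive and strictly decreasing in $u$ by the classical monotonicity of $r \mapsto r/(e^r-1)$. Differentiating the implicit equation once and using $\partial_a h_a = \psi\, h_a$, one arrives at the covariance-type representation
\begin{equation*}
\phi'(a) \;=\; \frac{p(1-p)\,Z(a)}{h_a(\phi(a))}\;\bigl(E_a[\psi \mid U_a > \phi(a)] - E_a[\psi \mid U_a < \phi(a)]\bigr),
\end{equation*}
where $U_a$ has density $h_a/Z(a)$; the monotonicity of $\psi$ in $u$ makes this negative, consistent with Proposition~\ref{prop2}.

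For convexity, the plan is to differentiate the implicit identity once more and prove $\phi''(a) > 0$. Expanding $\partial_a$ through both the upper limit $\phi(a)$ and the integrand $h_a$, the expression for $\phi''(a)$ organises itself as a linear combination of moments of $1$, $\psi$, $\psi^2$ and $\partial_a \psi$ against $h_a$, restricted to the two intervals $(0,\phi(a))$ and $(\phi(a),\infty)$, plus a boundary contribution from $h_a(\phi)\phi'$. The real analyticity asserted in Proposition~\ref{prop1} justifies all differentiations.

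The expected main obstacle is establishing positivity of this expression for every $a>0$ and every $b \in (0,1)$. The intended route is to rewrite $\phi''(a)$ as a sum of two manifestly non-negative terms: a Chebyshev/covariance-type contribution reflecting the strict monotonicity of $\psi(\cdot,a)$ on each of $(0,\phi)$ and $(\phi,\infty)$ (which isolates a conditional variance of $\psi$ weighted by $h_a$), plus a remainder whose sign is forced by $b-1<0$ and the explicit shape of $h_a$. Strict monotonicity of $\psi$ and full support of $h_a$ then upgrade the inequality to the strict convexity asserted in the theorem.
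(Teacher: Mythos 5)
Your setup is identical to the paper's: the substitution $t=e^{-u/a}$ produces exactly the paper's equation (\ref{phimaineq}), the boundary terms arising from differentiating the implicit identity twice are disposed of by the signs of $\phi'$, $\partial_a h_a>0$ and $\partial_u h_a<0$ (all of which you have), and the problem reduces, precisely as in the paper, to proving
\begin{equation*}
(1-p)\int_{\phi(a)}^{\infty}\partial_a^2 h_a(t)\,\mathd t \;-\; p\int_0^{\phi(a)}\partial_a^2 h_a(t)\,\mathd t \;\geqslant\; 0 .
\end{equation*}
This is where your proposal stops being a proof. Writing $\partial_a^2 h_a=\bigl(\psi^2+\partial_a\psi\bigr)h_a$ with your $\psi=\partial_a\log h_a$, the relevant weight $g\assign\psi^2+\partial_a\psi=\partial_a^2 h_a/h_a$ equals, up to the positive factor $2/a^2$, the function $(b-1)\,h_0(t/a)$, where $h_0(s)=s\,w(s)/(e^s-1)^2$ and $w(x)=(1-x/2)e^x+\tfrac{b-1}{2}x-1$ are the paper's (\ref{h0}) and (\ref{www}). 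This weight is neither of one sign nor monotone on $(0,\infty)$: $w$ has a unique root $\rho$ (Lemma \ref{lemmaeta}), so $g$ changes sign at $\rho a$; and since $h_0$ vanishes at $\rho$, is negative on $(\rho,\infty)$ and tends to $0$ at infinity, $g$ rises and then falls back to $0$ beyond $\rho a$. Consequently the Chebyshev/covariance-type argument you invoke --- which requires $g$ to be monotone, or at least to admit a pointwise comparison with $g(\phi(a))$ of a consistent sign on each side of $\phi(a)$ --- is not available globally, and no remainder has its sign ``forced by $b-1<0$'' alone. The claimed decomposition into two manifestly non-negative terms does not exist in the direct form you state.

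The paper closes exactly this gap with machinery your plan does not anticipate: a case split on $\phi(a)<\rho a$ versus $\phi(a)\geqslant\rho a$; in the first case, monotonicity of $h_0$ on $(0,\rho)$ \emph{only} (Lemma \ref{lemmaexppol}), which permits the regional comparison of $h(a;t)$ against $h(a;\phi(a))$ --- using positivity of $h(a;\phi(a))$ and negativity of $h$ beyond $\rho a$ --- followed by the quantile identity (\ref{phimaineq}); and in the second case, where that comparison breaks down, the decidedly non-obvious identity $\int_0^\infty\eta(t)\,\mathd t=0$ (Lemma \ref{inth0}, an explicit digamma/beta-function evaluation), combined with the sign pattern of $\eta$, to control the two rescaled integrals separately. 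Your first-derivative covariance formula and the strict negativity of $\phi'$ are correct, but the entire content of the theorem sits in the step you label the ``expected main obstacle'' and leave as intent; as written, the proposal is a correct setup with the hard part missing.
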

		
		\begin{theorem}
			\label{logconc}For fixed $b \in (0,\infty)$, $q$ in (\ref{betamaineq}) is (strictly) log-concave.
		\end{theorem}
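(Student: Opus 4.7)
\medskip

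\noindent The plan is to reduce the statement to an inequality for $\phi$. From the definition \eqref{phi} we have $\log q(a)=-\phi(a)/a$, and differentiating twice gives
\[
(\log q)''(a) \;=\; -\,\frac{a^{2}\phi''(a) - 2a\,\phi'(a) + 2\phi(a)}{a^{3}}.
\]
Consequently, strict log-concavity of $q$ is equivalent to the strict inequality
\begin{equation}\label{planLogConc}
\Delta(a)\;:=\;a^{2}\phi''(a) - 2a\,\phi'(a) + 2\phi(a) \;>\; 0 \qquad (a>0),
\end{equation}
i.e.\ to strict convexity of the positive function $a\mapsto \phi(a)/a=-\log q(a)$. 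I would establish \eqref{planLogConc} by a case split on $b\le 1$ versus $b>1$.

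For $b\le 1$, \eqref{planLogConc} is essentially a corollary of what has already been proved. By Proposition~\ref{prop2} both limits of $\phi$ are strictly positive (both $-\log p>0$ and $\gamma_b>0$) and $\phi$ is non-increasing, so $\phi>0$ and $\phi'\le 0$ on $(0,\infty)$; by Theorem~\ref{conv1} also $\phi''\ge 0$ for $b<1$, while for $b=1$ the function $\phi$ is constant and $\phi'=\phi''=0$. Each of the three summands in $\Delta(a)$ is therefore non-negative and the third, $2\phi$, is strictly positive, so $\Delta>0$ and strict log-concavity follows at once.

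The case $b>1$ is the delicate one. Now Proposition~\ref{prop2} gives $\phi'>0$, so the middle term in \eqref{planLogConc} is negative, and a short asymptotic analysis at $a\to\infty$ from the classical Gamma limit $a(1-q(a))\to\gamma_b$ shows that $\phi''(a)<0$ for all sufficiently large $a$; a term-by-term argument in \eqref{planLogConc} is therefore not available. My plan here is to go back to the defining equation $I(q(a);a,b)=p$ and differentiate it twice in $a$. Letting $T$ be beta-distributed with parameters $a,b$ and conditioning on the complementary events $A=\{T\le q(a)\}$ and $A^{c}=\{T>q(a)\}$, one can express $q'$ and $q''$ in closed form in terms of the conditional first and second moments of $\log T$ on $A$ and $A^{c}$; the target inequality $qq''\le (q')^{2}$ then takes the shape of a correlation-type inequality between $\log T$ and $(\log T)^{2}$ in these two conditional distributions. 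The main obstacle is precisely this inequality: in the regime $b>1$ the convexity of $\phi$ is no longer available, so some genuinely new input beyond Theorem~\ref{conv1} is required, which I expect to extract from the explicit structure of the factor $(1-t)^{b-1}$ in the Beta density together with the more general monotonicity framework for inverses of parametrised distributions mentioned in the introduction.
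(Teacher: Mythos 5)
Your reduction is sound: writing $\psi=-\log q=\phi/a$, strict log-concavity of $q$ is indeed equivalent to $a^{2}\phi''-2a\phi'+2\phi>0$, and your treatment of $b\le 1$ is correct and essentially identical to the paper's own first step (the paper phrases it as $a\psi''=\phi''-2\psi'>0$, using Theorem~\ref{conv1} and $\psi'<0$; your version with $\phi>0$, $\phi'\le 0$, $\phi''\ge 0$ is the same computation). The problem is that for $b\le 1$ the theorem is little more than a corollary of Theorem~\ref{conv1}; the case $b>1$ is the actual content of Theorem~\ref{logconc}, and there your proposal has a genuine gap that you yourself acknowledge: the ``correlation-type inequality'' between conditional moments of $\log T$ on $\{T\le q\}$ and $\{T>q\}$ is never formulated precisely, let alone proved, and nothing in the earlier sections (the monotonicity framework of Lemmas~\ref{lemmageratio} and~\ref{lemmaq} gives first-order, not second-order, information) supplies it. A proof attempt that ends with ``some genuinely new input is required, which I expect to extract from the structure of $(1-t)^{b-1}$'' is a research plan, not a proof.

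For comparison, the new input the paper actually constructs for $b>1$ is quite different from your conditional-moment plan: it expands the defining equation via the hypergeometric representation \cite[8.17.7]{DLMF} into the series identity (\ref{eqpsi}), differentiates it and reorganises the resulting double sums (using the two summation identities of Lemma~\ref{leema1}) into the closed form
\begin{equation*}
\psi' \;=\; \sum_{n=0}^{\infty}\frac{1}{a+b+n}\,Y_{n+b}(\psi)\;-\;\sum_{n=0}^{\infty}\frac{1}{a+n}\,Y_{n}(\psi),
\qquad
Y_{c}(\psi)=\frac{\int_{0}^{\psi}e^{ct}(1-e^{-t})^{b-1}\,\mathd t}{e^{c\psi}(1-e^{-\psi})^{b-1}},
\end{equation*}
and then gets $\psi''>0$ by differentiating this formula and invoking the monotonicity of $Y_{c}$ in $x$ and in $c$ (Lemma~\ref{lemmaH}) together with $\psi'<0$. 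Unless you can prove your conditional-moment inequality directly, which the paper's author evidently found necessary to replace by this series machinery, your argument establishes the theorem only for $b\le 1$.
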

		
		\begin{remark}
			One can infer from Figure \ref{figq} that $q$ is neither concave nor
			convex; its reciprocal $1 / q$, though, is logarithmically convex by Theorem
			\ref{logconc}, hence also convex. Moreover, based on Figure \ref{figphi}, as
			well as numerical results, for $b > 1$ we conjecture that $\phi$ is concave.
		\end{remark}
		
		The article is organised in the following way. In section 2 we present some
		general results regarding $p$-quantiles of more general probability
		distributions, that have some interest by themselves. For instance, Lemma
		\ref{lemmageratio} is a generalisation of results concerning monotonicity
		properties of ratios of power series and polynomials to ratios of integrals.
		In section 3 we study the monotonicity and limit properties of $q$ and $\phi$
		and prove Propositions \ref{prop1} and \ref{prop2}. In section 4 we prove
		convexity of $\phi$ for $b < 1$, while in section 5 we prove logarithmic
		concavity of $q$. In the Appendix, we look into the dependence on the
		parameter $b$ with $a$ being fixed and translate some of the results in this
		case.
		
		\section{General results on $p$-quantiles of probability distributions}
				
		The following lemma is a standard result in measure theory, that lets us
		interchange integration and differentiation {\cite[Theorem 6.28]{prob}}. In
		the rest of the paper, $\partial_x$ denotes differentiation with respect to the
		variable $x$.
		
		\begin{lemma}
			\label{lemmaderiv}Let $(\Omega, \mathcal{B}, \mu)$ be a measure space, $I
			\subset \mathbb{R}$ an open interval and $f : I \times \Omega \rightarrow
			\mathbbm{R}$ a function such that:
			\begin{enumerateroman}
				\item $a \mapsto f (a, t)$ is differentiable for $\mu$-a.e. $t \in \Omega$
				
				\item $t \mapsto f (a, t)$ is $\mu$-integrable for all $a \in I$
				
				\item $\exists g \in L^1  (\Omega, \mathd \mu)$ such that $| \partial_a f
				(a, t) | \leq g (t)$ for all $a \in I$ and $\mu$-a.e. $t \in \Omega$
			\end{enumerateroman}
			Then, the function $a \mapsto \int_{\Omega} f (a, t) \mathd \mu (t)$ is
			differentiable and
			\[ \partial_a  \int_{\Omega} f (a, t) \mathd \mu (t) = \int_{\Omega}
			\partial_a f (a, t) \mathd \mu (t) \]
		\end{lemma}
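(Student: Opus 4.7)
The plan is to reduce the statement to the Dominated Convergence Theorem via the sequential characterisation of differentiability. Fix $a_0 \in I$; since $I$ is open, choose $r>0$ with $[a_0-r,a_0+r]\subset I$. For any sequence of nonzero reals $h_n \to 0$ with $|h_n|<r$, set
\[ F_n(t) := \frac{f(a_0+h_n,t)-f(a_0,t)}{h_n}. \]
Showing that $\int_\Omega F_n\, \mathd\mu \to \int_\Omega \partial_a f(a_0,t)\,\mathd\mu(t)$ for every such sequence will, by the sequential criterion for limits, give both existence of the derivative of $a\mapsto\int_\Omega f(a,t)\,\mathd\mu(t)$ at $a_0$ and the claimed formula.

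Two things need to be verified in order to invoke dominated convergence. First, pointwise convergence: for $\mu$-a.e.\ $t$, hypothesis (i) yields $F_n(t)\to \partial_a f(a_0,t)$ directly from the definition of the partial derivative. Second, an integrable dominator: fix a $t$ in the full-measure set where $a\mapsto f(a,t)$ is differentiable on $I$. Differentiability on $(a_0-r,a_0+r)$ lets us apply the mean value theorem, producing some $\xi_n$ between $a_0$ and $a_0+h_n$ with $F_n(t)=\partial_a f(\xi_n,t)$. Hypothesis (iii) then gives $|F_n(t)|\le g(t)$ for $\mu$-a.e.\ $t$, and $g\in L^1(\Omega,\mathd\mu)$ by assumption.

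It remains to check measurability and conclude. The maps $t\mapsto F_n(t)$ are measurable since each $t\mapsto f(a_0+h_n,t)$ is, by hypothesis (ii), and hence $t\mapsto\partial_a f(a_0,t)$ is measurable as a $\mu$-a.e.\ pointwise limit of measurable functions. Dominated convergence now applies, yielding $\int_\Omega F_n\,\mathd\mu \to \int_\Omega \partial_a f(a_0,t)\,\mathd\mu(t)$. By linearity of the integral the left-hand side equals $h_n^{-1}\bigl(\int_\Omega f(a_0+h_n,t)\,\mathd\mu(t)-\int_\Omega f(a_0,t)\,\mathd\mu(t)\bigr)$, so $a\mapsto\int_\Omega f(a,t)\,\mathd\mu(t)$ is differentiable at $a_0$ with the stated derivative.

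The main point requiring thought is the construction of the dominator: hypothesis (iii) bounds $\partial_a f$ pointwise, but one wants a bound on the difference quotient $F_n$. The mean value theorem bridges this gap, but its application genuinely uses differentiability of $a\mapsto f(a,t)$ on an entire open interval around $a_0$, a condition that is free here because $I$ is open and hypothesis (i) assumes differentiability at every point of $I$. Everything else is standard measure-theoretic bookkeeping.
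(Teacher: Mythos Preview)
The paper does not supply its own proof of this lemma: it is stated as a standard measure-theoretic result with a citation to \cite[Theorem 6.28]{prob}, and no argument is given. Your proof is correct and is exactly the standard route---mean value theorem to dominate the difference quotients, then dominated convergence along sequences---so there is nothing to compare.
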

		
		\begin{lemma}
			\label{lemmageratio}Let $I \subset \mathbb{R}$ be an open interval, $A
			\subset \mathbb{R}$ a non-empty Borel set, $\mu$ a $\sigma$-finite Borel
			measure on $A$ and $u, v : A \rightarrow [0, + \infty)$ measurable
			functions, not simultaneously $0$. Let $f : I \times A \rightarrow (0, +
			\infty)$ such that
			\begin{enumerateroman}
				\item $a \mapsto f (a, t)$ is differentiable for $\mu$-a.e. $t \in A$
				
				\item $t \mapsto u (t) f (a, t)$ and $t \mapsto v (t) f (a, t)$ are
				$\mu$-integrable for all $a \in I$.
				
				\item For each compact subset $K \subset I$, there exists a function $g_K
				: A \rightarrow [0, + \infty)$ such that $ug_K, vg_K$ are $\mu$-integrable
				and $| \partial_a f (a, t) | \leq g_K (t)$ for all $a \in K$ and
				$\mu$-a.e. $t \in A$.
			\end{enumerateroman}
			Let $F : I \rightarrow \mathbb{R}$ be defined by:
			\[ F (a) \assign \frac{\int_A f (a, t) u (t) \mathd \mu (t)}{\int_A f (a, t)
				v (t) \mathd \mu (t)} \]
			Then, the following hold:
			  \begin{enumerate}[I.]
				\item If for all $a \in I$ and for $\mu$-a.e. $t \in A$,  $\partial_a f (a, t) / f (a, t)$ and $u (t) / v
				(t)$ both increase or both decrease wrt $t$, then $F$ is increasing.
				\item If for all $a \in I$ and for $\mu$-a.e. $t \in A$, $\partial_a f (a, t) / f (a, t)$ increases (decreases) wrt
				$t$ and $u (t) / v (t)$ decreases (increases), then $F$ is
				decreasing.
			\end{enumerate}
		\end{lemma}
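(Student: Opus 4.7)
The plan is to differentiate $F$ and rewrite $F'(a)$ as a symmetric double integral whose sign can be read off from the hypotheses via a standard Chebyshev-type rearrangement trick.

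\textbf{Step 1: Differentiate under the integral sign.} Write $P(a) := \int_A f(a,t) u(t)\, \mathd\mu(t)$ and $Q(a) := \int_A f(a,t) v(t)\, \mathd\mu(t)$, so that $F = P/Q$. By hypothesis (iii), for every compact $K \subset I$ the function $ug_K$ dominates $|\partial_a f(a,\cdot) u(\cdot)|$ uniformly in $a \in K$, and similarly for $v$; hence Lemma \ref{lemmaderiv} applies locally in $a$ and gives $P'(a) = \int_A \partial_a f(a,t) u(t) \,\mathd\mu(t)$, and analogously for $Q'(a)$. Since $f > 0$ and $u,v$ are not simultaneously zero, $Q(a) > 0$, and the quotient rule yields $F'(a) = (P'Q - PQ')/Q^2$.

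\textbf{Step 2: Symmetrize the numerator.} Using Fubini (justified by the integrability from (ii) and (iii)) we write $P'(a) Q(a) - P(a) Q'(a)$ as the double integral
\[
\iint_{A\times A}\bigl[\partial_a f(a,s) f(a,t) - f(a,s)\partial_a f(a,t)\bigr] u(s) v(t)\, \mathd\mu(s)\mathd\mu(t).
\]
Interchanging the dummy variables $s$ and $t$ and averaging the two expressions produces the symmetric form
\[
P'Q - PQ' \;=\; \tfrac{1}{2}\iint_{A\times A}\bigl[\partial_a f(a,s) f(a,t) - f(a,s)\partial_a f(a,t)\bigr]\bigl[u(s) v(t) - v(s) u(t)\bigr]\mathd\mu(s)\mathd\mu(t).
\]

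\textbf{Step 3: Factor and read off the sign.} Since $f>0$, the first bracket equals $f(a,s) f(a,t)\bigl[\tfrac{\partial_a f(a,s)}{f(a,s)} - \tfrac{\partial_a f(a,t)}{f(a,t)}\bigr]$. On the subset where $v(s), v(t) > 0$, the second bracket equals $v(s) v(t)\bigl[\tfrac{u(s)}{v(s)} - \tfrac{u(t)}{v(t)}\bigr]$. Under hypothesis I the two bracketed monotone differences have the same sign for every pair $(s,t)$, so the integrand is $\ge 0$ and $F$ is increasing; under hypothesis II the signs are opposite, so the integrand is $\le 0$ and $F$ is decreasing.

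\textbf{Main obstacle.} The only real subtlety is the handling of the zero set of $v$, where the ratio $u/v$ is not defined: one must interpret the monotonicity of $u/v$ as holding on $\{v > 0\}$ and verify directly that the factor $u(s)v(t) - v(s) u(t)$ still has the required sign when one of $v(s), v(t)$ vanishes (using $u, v \ge 0$). Once this bookkeeping is in place, the symmetrization argument of Step 2 is the heart of the proof, and extensions to strict monotonicity follow by observing that the integrand is then strictly signed on a set of positive $\mu\otimes\mu$-measure provided one of the two monotone differences is nonconstant.
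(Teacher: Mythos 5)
Your proposal is correct and follows essentially the same route as the paper: differentiation under the integral sign via Lemma \ref{lemmaderiv}, rewriting the numerator of $F'$ as a double integral, and a Chebyshev-type symmetrization (the paper splits the square into $\{s<t\}$ and $\{s>t\}$ and swaps variables, while you average over the full square with a factor $\tfrac{1}{2}$ --- the same trick) before reading off the sign from the two monotone differences. Your extra care about the zero set of $v$ corresponds to the remark the paper makes immediately after the lemma, so nothing is missing.
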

		
		\begin{proof}
			Let $U (a) = \int_A f (a, t) u (t) \mathd \mu (t)$, $V (a) = \int_A f (a, t)
			v (t) \mathd \mu (t)$. By the fact that $u (a) \partial_a f (a, t)$ and $v
			(t) \partial_a f (a, t)$ are dominated in compact subsets of $I$ by a
			$\mu$-integrable function of $t$, Lemma \ref{lemmaderiv} gives that both $U$
			and $V$ are differentiable, and the derivatives can be given by
			differentiating the integrands. Then, $F'$ also exists and hence we need to
			investigate the derivative
			
			\begin{align*}
			F' (a) = \frac{U' (a) V (a) - U (a) V' (a)}{V^2 (a)}
			\end{align*}			
			We find
			\begin{align*}
			& U' (a) V (a) - U (a) V' (a) =\\
			= & \int_A \int_A u (s) v (t)  (\partial_a f (a, s) f (a, t) - \partial_a
			f (a, t) f (a, s)) \mathd \mu (s) \mathd \mu (t)\\
			= & \int_A \int_{A \cap \{s < t\}} u (s) v (t)  (\partial_a f (a, s) f
			(a, t) - \partial_a f (a, t) f (a, s)) \mathd \mu (s) \mathd \mu (t)\\
			& \qquad + \int_A \int_{A \cap \{s > t\}} u (s) v (t)  (\partial_a f (a,
			s) f (a, t) - \partial_a f (a, t) f (a, s)) \mathd \mu (s) \mathd \mu
			(t)\\
			= & \int_A \int_{A \cap \{s < t\}} u (s) v (t)  (\partial_a f (a, s) f
			(a, t) - \partial_a f (a, t) f (a, s)) \mathd \mu (s) \mathd \mu (t)\\
			& \qquad + \int_A \int_{A \cap \{s < t\}} u (t) v (s)  (\partial_a f (a,
			t) f (a, s) - \partial_a f (a, s) f (a, t)) \mathd \mu (s) \mathd \mu
			(t)\\
			&= \int_A \int_{A \cap \{s < t\}} (u (s) v (t) - u (t) v (s)) 
			(\partial_a f (a, s) f (a, t) - \partial_a f (a, t) f (a, s)) \mathd \mu
			(s) \mathd \mu (t)
			\end{align*}			
			where in the pre-last equality we have made use of Fubini's theorem. The last
			integrand, as $s < t$, is non-negative (non-positive) if $\partial_a f / f$
			and $u / v$ have the same (opposite) monotonicity properties, which proves
			the lemma.
		\end{proof}
		
		\begin{remark}
			In the proceeding Lemma, the same conclusion holds if $u$, $v$ can assume
			the value zero at the same time, as then, without loss of generality, we can
			just integrate over the set $A' = A \setminus (\{u (t) = 0\} \cap \{v (t) =
			0\})$, which is again a Borel set, and we consider the condition $u / v$
			being increasing (or decreasing) in $A'$.
		\end{remark}
		
		\begin{remark}
			Lemma \ref{lemmageratio} is a general case of results concerning
			monotonicity properties of ratios of power series and polynomials. For
			instance, it gives \cite[Lemma 2.2]{henrik1}, if we set $\mu$ to be
			the counting measure on $\mathbb{N}$.
		\end{remark}
		
		\begin{lemma}
			\label{lemmaq}Let $I, J$ be two open intervals. Let $f : I \times J
			\rightarrow (0, \infty)$ such that:
			\begin{enumerateroman}
				\item $a \mapsto f (a, x)$ is differentiable for a.e. $x \in J$
				
				\item $x \mapsto f (a, x)$ is integrable for all $a \in I$
				
				\item For each compact subset $K \subset I$, there exists an integrable
				function $g_K : J \rightarrow [0, + \infty)$ such that $| \partial_a f (a,
				t) | \leq g_K (t)$ for all $a \in K$ and $\mu$-a.e. $t \in A$.
				
				\item The logarithmic derivative of $f$ wrt $a$ is increasing (decreasing)
				wrt $x$ for a.e. $x$, i.e.
				\[ \frac{\partial_a f (a, x)}{f (a, x)} \uparrow_x (\downarrow_x) \]
			\end{enumerateroman}
			Then, the $p$-quantile $q (a)$ of the probability distribution with density $f (a, x) / \int_J f (a, t) dt$ is increasing (decreasing) wrt $a$.
		\end{lemma}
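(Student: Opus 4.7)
The plan is to reduce Lemma \ref{lemmaq} to a direct application of Lemma \ref{lemmageratio}, followed by an elementary comparison argument. Write $J = (\alpha,\beta)$ (allowing unbounded endpoints). Since $f(a,\cdot) > 0$, the cumulative distribution function
\[ F_a(x_0) = \frac{\int_\alpha^{x_0} f(a,t)\,\mathd t}{\int_\alpha^\beta f(a,t)\,\mathd t} \]
is continuous and strictly increasing in $x_0$, so the $p$-quantile $q(a)$ is uniquely characterised by $F_a(q(a)) = p$. The goal is therefore to understand the dependence of $F_a(x_0)$ on $a$ for each fixed $x_0$.

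To that end, I would fix $x_0 \in J$ and observe that $F_a(x_0)$ has exactly the shape of the ratio in Lemma \ref{lemmageratio}: take $A = J$, $\mu$ Lebesgue measure on $J$, $u(t) = \mathbbm{1}_{(\alpha,x_0)}(t)$, and $v(t) \equiv 1$. The integrability and domination hypotheses (i)--(iii) of Lemma \ref{lemmageratio} follow immediately from (i)--(iii) of Lemma \ref{lemmaq}, because $u, v \leq 1$. The ratio $u/v$ is the indicator $\mathbbm{1}_{(\alpha,x_0)}$, which is (weakly) decreasing in $t$. Hence, when $\partial_a f / f$ is increasing in $x$, the two ratios $\partial_a f/f$ and $u/v$ have opposite monotonicities and part II of Lemma \ref{lemmageratio} gives that $a \mapsto F_a(x_0)$ is decreasing on $I$; when $\partial_a f / f$ is decreasing in $x$, part I gives that $a \mapsto F_a(x_0)$ is increasing on $I$.

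Finally, I would conclude by a comparison argument. Take $a_1 < a_2$ in $I$. In the increasing case (hypothesis: $\partial_a f/f$ increases in $x$), apply the previous paragraph with $x_0 = q(a_1)$ to get
\[ F_{a_2}(q(a_1)) \;\leq\; F_{a_1}(q(a_1)) \;=\; p \;=\; F_{a_2}(q(a_2)), \]
and since $F_{a_2}$ is strictly increasing in $x$, this forces $q(a_1) \leq q(a_2)$. The decreasing case is symmetric. There is no real obstacle here: the lemma is essentially a corollary of Lemma \ref{lemmageratio}, and the only delicate choice is taking $u = \mathbbm{1}_{(\alpha,x_0)}$, $v \equiv 1$ so that the ratio $u/v$ encodes the CDF truncation at $x_0$ and has the correct (monotone) behaviour in $t$.
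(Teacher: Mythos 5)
Your proposal is correct and follows essentially the same route as the paper: both express the CDF as the ratio $\int f(a,t)\mathbbm{1}_{(\alpha,x_0)}(t)\,\mathd t \big/ \int f(a,t)\,\mathd t$, invoke Lemma \ref{lemmageratio} with $u$ the indicator and $v \equiv 1$ to get pointwise monotonicity of the CDF in $a$, and then compare values of the CDF at a quantile to transfer that monotonicity to $q$. The only differences are cosmetic (the paper evaluates at $q(a+h)$ rather than $q(a_1)$, and handles the increasing case explicitly, leaving the other as analogous).
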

		
		\begin{proof}
			We will deal with the case that the logarithmic derivative of $f$ is
			increasing, and the other case, that it is decreasing, is analogous. Let $x
			\in J = (c, d)$, where $- \infty \leq c < d \leq + \infty$. Then the
			cumulative distribution function is			
			\begin{align*}
			F (a ; x) = \frac{\int_c^x f (a, t) \mathd t}{\int_c^d f (a, t) \mathd t}
			= \frac{\int_c^d f (a, t) 1_{[c, x]} (t) \mathd t}{\int_c^d f (a, t)
				\mathd t}
			\end{align*}			
			We set $u (t) = 1_{[c, x]} (t)$ and $v (t) = 1$. As $u / v = u$ decreases
			and $\partial_a f / f$ increases wrt $t$, by Lemma \ref{lemmageratio} we get
			that $F$ decreases pointwise wrt $a$. This means			
			\begin{align*}
			\frac{\int^{q (a + h)}_c f (a, t) \mathd t}{\int_c^d f (a, t) \mathd t}
			\geq \frac{\int^{q (a + h)}_c f (a + h, t) \mathd t}{\int_c^d f (a + h, t)
				\mathd t} = p = \frac{\int^{q (a)}_c f (a, t) \mathd t}{\int_c^d f (a, t)
				\mathd t}
			\end{align*}			
			so that $q (a + h) \geq q (a)$ and hence that the $p$-quantile is
			increasing.
		\end{proof}
		
		\begin{remark}
			In Lemma \ref{lemmageratio}, if the logarithmic derivative $\partial_a f /
			f$ is strictly monotone (and $u \nequiv v$), it is easy to see from the
			proof that in the conclusion the ratio of the integrals should also be
			strictly monotone. Hence, also in Lemma \ref{lemmaq}, if the logarithmic
			derivative is strictly increasing (decreasing), then the $p$-quantile is
			also strictly increasing (decreasing).
		\end{remark}
		
		The following Lemma deals with the question of convergence of $p$-quantiles of
		a convergent sequence of probability distributions. We denote the extended
		real line $\mathbbm{R} \cup \{ \pm \infty \}$ by $\hat{\mathbbm{R}}$, with its
		usual topology.
		
		\begin{lemma}
			\label{lemmaasymp}Let $F_n : \hat{\mathbbm{R}} \rightarrow [0, 1]$ be a
			sequence of cumulative distribution functions on $\mathbbm{R}$, extended by
			$F_n (- \infty) \assign 0$ and $F_n (+ \infty) \assign 1$. Let $q_n$ be a
			$p$-quantile of $F_n$, i.e. $F_n (q_n) = p \in (0, 1)$, $\forall n \in
			\mathbbm{N}$. Assume the following conditions:
			\begin{enumerateroman}
				\item The sequence $(F_n (x))_{n \in \mathbbm{N}}$ converges pointwise to
				a limit $F_{\infty} (x) \assign \lim_{n \rightarrow \infty} F_n
				(x)$\label{cond1}
				
				\item The sequence of $p$-quantiles converges to a limit $q_{\infty}
				\assign \lim_{n \rightarrow \infty} q_n \in
				\hat{\mathbbm{R}}$\label{cond3}
			\end{enumerateroman}
			Then,
			\begin{equation}
			q_{\infty} \in [\sup \{ x \in \hat{\mathbbm{R}} |F_{\infty} (x) < p \},
			\inf \{ x \in \hat{\mathbbm{R}} |F_{\infty} (x) > p \}] \label{pquaincl}
			\end{equation}
			Thus, if $F_{\infty}$ is continuous, $q_{\infty}$ is a $p$-quantile of
			$F_{\infty}$.
		\end{lemma}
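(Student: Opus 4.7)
The plan is to prove the inclusion \eqref{pquaincl} by two symmetric contradiction arguments, one for each endpoint of the interval, and then to deduce the ``Thus'' statement from monotonicity plus continuity of $F_\infty$. Write $L\assign \sup\{x\in\hat{\mathbb R}:F_\infty(x)<p\}$ and $R\assign \inf\{x\in\hat{\mathbb R}:F_\infty(x)>p\}$. Both are well-defined in $\hat{\mathbb R}$ because $F_\infty(-\infty)=0<p<1=F_\infty(+\infty)$ (these equalities are forced by the extension convention, which passes to the limit).

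Suppose for contradiction that $q_\infty>R$. Since $R$ is an infimum strictly smaller than $q_\infty$, the defining set must contain an element $y$ with $y<q_\infty$, in particular $F_\infty(y)>p$. By hypothesis \ref{cond3} the inequality $q_n>y$ holds for all sufficiently large $n$, and since each $F_n$ is nondecreasing we get
\begin{equation*}
F_n(y)\le F_n(q_n)=p.
\end{equation*}
Letting $n\to\infty$ and using the pointwise convergence from hypothesis \ref{cond1}, we obtain $F_\infty(y)\le p$, a contradiction. The symmetric argument applies if $q_\infty<L$: pick $y$ with $q_\infty<y$ and $F_\infty(y)<p$, note $q_n<y$ eventually, so $F_n(y)\ge F_n(q_n)=p$, and take limits to contradict $F_\infty(y)<p$. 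This establishes \eqref{pquaincl}.

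For the final assertion, observe first that $F_\infty$ is nondecreasing on $\hat{\mathbb R}$, being the pointwise limit of nondecreasing functions. Combined with the definition of $L$ and $R$, this forces $F_\infty(y)\ge p$ for every $y>L$ and $F_\infty(y)\le p$ for every $y<R$; in particular $F_\infty\equiv p$ on the (possibly empty) open interval $(L,R)$. If in addition $F_\infty$ is continuous, the same identity extends to the closed interval $[L,R]$, so any $q_\infty$ in that interval satisfies $F_\infty(q_\infty)=p$ and is therefore a $p$-quantile of $F_\infty$.

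There is no real technical obstacle; the argument reduces to two one-sided comparisons using the monotonicity of each $F_n$ together with pointwise convergence. The one subtle point to handle carefully is the bookkeeping in $\hat{\mathbb R}$ (allowing $q_\infty=\pm\infty$ and the sup/inf to be attained at the boundary), and the fact that in the ``continuous $F_\infty$'' statement one is implicitly using that the pointwise limit of CDFs is still monotone so that ``continuous + monotone'' pins down $F_\infty$ to equal $p$ throughout $[L,R]$ rather than just inside it.
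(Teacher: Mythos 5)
Your proof is correct and is essentially the paper's own argument: both endpoint bounds rest on the same comparison --- monotonicity of each $F_n$ combined with the two convergence hypotheses --- and you have merely organised it as a contradiction, i.e.\ the contrapositive of the paper's direct claim that every $w$ with $F_{\infty}(w)<p$ satisfies $w\le q_{\infty}$ (the paper invokes convergence of $F_n(w)$ first and of $q_n$ second, you do the reverse, but the content is identical). The one hairline spot is the degenerate case $L=R$, where ``extending $F_{\infty}\equiv p$ from the open interval $(L,R)$'' is vacuous; however, the one-sided inequalities you already established ($F_{\infty}\le p$ to the left of $R$ and $F_{\infty}\ge p$ to the right of $L$) give $F_{\infty}(L)=p$ directly from continuity at that single point, so the conclusion stands --- and the paper's own proof is no more detailed on this final step.
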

		
		\begin{proof}
			Let some $w \in \hat{\mathbbm{R}}$ such that $F_{\infty} (w) < p$. By
			condition (\ref{cond1}) we have that there is some $n_0 \in \mathbbm{N}$
			such that $\forall n > n_0 : F_n (w) < p = F_n (q_n)$. As each $F_n$ is
			non-decreasing, we have that $\forall n > n_0 : w < q_n$ and hence
			$q_{\infty} \geqslant w$. As this holds $\forall w \in \{ x \in
			\hat{\mathbbm{R}} |F_{\infty} (x) < p \}$, we get that $q_{\infty} \geqslant
			\sup \{ x \in \hat{\mathbbm{R}} |F_{\infty} (x) < p \}$. In a similar way we
			may prove that $q_{\infty} \leqslant \inf \{ x \in \hat{\mathbbm{R}}
			|F_{\infty} (x) > p \}$. In case $F_{\infty}$ is continuous, we have $[\sup
			\{ x \in \hat{\mathbbm{R}} |F_{\infty} (x) < p \}, \inf \{ x \in
			\hat{\mathbbm{R}} |F_{\infty} (x) > p \}] = \{ x \in \mathbbm{R}|F_{\infty}
			(x) = p \}$, hence then $q_{\infty}$ is a $p$-quantile of $F_{\infty}$.
		\end{proof}
		
		\begin{remark}
			\label{remarkasymp}As $\hat{\mathbbm{R}}$ is compact, $p$-quantiles always have limit points, and the above Lemma shows
			that convergence of distribution functions for which $p$-quantiles
			exist implies that all their limit points lie in the interval in
			(\ref{pquaincl}). This interval either consists of the closure of $F_{\infty}^{-1}(\{p\})$, or, if this set is empty, it degenerates to a
			point, which is a point of discontinuity of $F_{\infty}$.
		\end{remark}
		
		\begin{lemma}
			\label{real}Let $I,J \subset \mathbb{R}$ be open intervals, and $(F (a ; x))_{a
				\in I}$ be a family of cumulative probability distribution functions of $x$ on $J$, having positive densities $f (a ;
			t)$ with respect to Lebesgue measure. Moreover assume that the corresponding
			densities are real analytic in both variables. Denote the respective
			$p$-quantiles by $q (a)$. Then, $q$ is a real analytic function of $a$.
		\end{lemma}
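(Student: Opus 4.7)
The plan is to apply the real analytic implicit function theorem to the defining relation $F(a;q(a))=p$, where $F(a;x) := \int_c^x f(a;t)\,dt$ and $J=(c,d)$. Because $f>0$ one has $\partial_x F(a_0;q(a_0)) = f(a_0;q(a_0)) > 0$ at each $a_0\in I$, so once joint real analyticity of $F$ is verified in a neighbourhood of $(a_0,q(a_0))$, the analytic implicit function theorem produces a local real analytic solution $\tilde q$ to $F(a;\tilde q(a))=p$, which by strict monotonicity of $F(a;\cdot)$ in $x$ must coincide with the unique $p$-quantile $q$.

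Everything thus reduces to showing that $F$ is jointly real analytic on $I\times J$. Fix $(a_0,x_0)\in I\times J$ and decompose
\[ F(a;x) = h(a) + \int_{x_0}^x f(a;t)\,dt, \qquad h(a) := F(a;x_0). \]
For the second summand, joint real analyticity of $f$ supplies a convergent double power series $f(a;t) = \sum_{j,k\geq 0} c_{jk}(a-a_0)^j(t-x_0)^k$ on some bidisc around $(a_0,x_0)$, and termwise integration yields
\[ \int_{x_0}^x f(a;t)\,dt = \sum_{j,k\geq 0} \frac{c_{jk}}{k+1}(a-a_0)^j(x-x_0)^{k+1}, \]
convergent on the same bidisc, hence jointly real analytic at $(a_0,x_0)$. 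It remains only to show that $h$ is real analytic at $a_0$.

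For this, I would complexify: joint real analyticity of $f$ on $I\times J$ yields a holomorphic extension $f(z,t)$ on an open $W\subset\mathbb{C}\times J$ containing $I\times J$. The aim is to exhibit, around $a_0$, a complex disc $U\subset\mathbb{C}$ together with an integrable majorant $g\in L^1(c,x_0)$ such that $|f(z,t)|\leq g(t)$ for all $z\in U$, $t\in(c,x_0)$. Granted such a bound, $\tilde h(z) := \int_c^{x_0} f(z,t)\,dt$ is well defined and continuous on $U$, and Morera's theorem applied to triangles in $U$, combined with Fubini to interchange contour and Lebesgue integrals, shows $\tilde h$ is holomorphic on $U$; hence $h=\tilde h\big|_{U\cap\mathbb{R}}$ is real analytic at $a_0$.

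The main obstacle is producing this uniform disc $U$ and $L^1$ majorant $g$, since the analytic radius of $f(\cdot,t)$ in the complex $a$-plane could in principle shrink to zero as $t\to c$ (or, symmetrically, $t\to d$). I would handle this by splitting $(c,x_0) = (c,c']\cup[c',x_0]$ for suitable $c'$: on $[c',x_0]$, a standard two-variable compactness argument on $[a_0-\epsilon,a_0+\epsilon]\times[c',x_0]$ yields a common complex strip and uniform bound, while on the tail $(c,c')$ one exploits the CDF constraint $\int_c^{c'} f(a;t)\,dt\leq 1$ for real $a\in[a_0-\epsilon,a_0+\epsilon]$ together with Cauchy-type estimates on a shrunken complex disc at $a_0$, transferring the uniform real $L^1$ bound to the complex disc. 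Once $h$ is real analytic, combining with the power-series representation of the $\int_{x_0}^x$ summand gives joint real analyticity of $F$ at $(a_0,x_0)$, and the analytic implicit function theorem concludes.
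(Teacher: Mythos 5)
Your overall route is the same as the paper's: apply the real analytic implicit function theorem to $F(a;q(a))=p$, using $\partial_x F=f>0$ and uniqueness of the quantile to identify the local analytic solution with $q$. The difference is that the paper simply \emph{asserts} that $F$ is real analytic, while you correctly recognised this as the actual crux: joint analyticity of the density does not obviously pass to the CDF, and the only problematic piece is $h(a)=\int_c^{x_0}f(a;t)\,dt$ (your termwise integration of the double power series handles $\int_{x_0}^{x}$ correctly). However, your repair of the tail is where the argument breaks. ``Cauchy-type estimates transferring the uniform real $L^1$ bound to the complex disc'' is not a legitimate move: Cauchy estimates bound derivatives at the centre of a disc by the values of a holomorphic function on a surrounding \emph{complex} circle; no estimate of this type lets you control values at non-real points by bounds on the real axis. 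A bound valid for all real $a$ gives no control off the axis: for instance $a^2/(t^2+a^2)\le 1$ for all real $a$ and all $t>0$, yet its holomorphic extension in $a$ has poles at $a=\pm it$, which approach $a_0=0$ as $t\to 0$. This is exactly the shrinking-radius phenomenon you set out to beat, and the real $L^1$ constraint does not beat it.

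In fact the gap cannot be closed, because the lemma as stated is false (so the paper's unproved assertion fails at the same point). Take $J=(0,\infty)$, $I=(-1/2,1/2)$, $p\in(0,1)$, and
\[ f(a;t)=\frac{1}{\pi}\left(\frac{1}{(1+t^2)(1+a^2t^2)}+\frac{a^2}{(1+t^2)(t^2+a^2)}+\frac{1}{1+t^2}\right), \]
which is positive and jointly real analytic on $I\times J$, since all denominators are nonvanishing for real $a$ and $t>0$. Partial fractions give
\[ \int_0^{\infty}\frac{dt}{(1+t^2)(1+a^2t^2)}=\frac{\pi}{2(1+|a|)},\qquad \int_0^{\infty}\frac{a^2\,dt}{(1+t^2)(t^2+a^2)}=\frac{\pi|a|}{2(1+|a|)}, \]
so the two non-analytic masses cancel and $\int_0^\infty f(a;t)\,dt\equiv 1$: a genuine family of probability densities. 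But for fixed $x_0>0$,
\[ \int_0^{x_0}\frac{a^2\,dt}{(1+t^2)(t^2+a^2)}=\frac{a^2}{a^2-1}\arctan x_0-\frac{a\arctan(a/x_0)}{1-a^2}+\frac{\pi}{2}\cdot\frac{|a|}{1-a^2}, \]
whence $F(a;x_0)=G(a;x_0)+\frac{|a|}{2(1-a^2)}$ with $G$ jointly real analytic near $(0,x_0)$ and $\partial_x G=f>0$. So $F(\cdot\,;x_0)$ has a corner at $a=0$; and since $G(a;q(a))=p-\frac{|a|}{2(1-a^2)}$, analyticity (even differentiability) of $q$ at $0$ would force the same for $|a|/(2(1-a^2))$, a contradiction. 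The $p$-quantile of this family is not even differentiable at $a=0$.

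The conclusion is that the lemma needs an extra hypothesis, and your Morera--Fubini framework tells you exactly which one: it suffices that each $a_0\in I$ admit a complex disc $U\ni a_0$ and a holomorphic-in-$a$ extension of $f$ to $U\times J$ dominated by a single $L^1(J)$ function; then $h$, hence $F$, is jointly real analytic and the implicit function theorem finishes the proof. The paper's application survives under this reading: for the beta density, $|t^{a-1}(1-t)^{b-1}|\le t^{\mathrm{Re}(a)-1}(1-t)^{b-1}$ provides precisely such a domination on each half-plane $\mathrm{Re}(a)>\epsilon$, so $I(x;a,b)$ is jointly real analytic and the use of Lemma \ref{real} in Proposition \ref{prop1} is legitimate. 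But as a free-standing statement, Lemma \ref{real} --- and any attempted proof of it, yours or the paper's --- requires this (or an equivalent) additional assumption.
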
		
		\begin{proof}
			As the densities are positive functions, the $p$-quantile exists and is
			unique for each $a$. Hence, the function $q (a)$ is well defined implicitly
			as the solution $y = q (a)$ to the equation $F (a ; y) - p = 0$. Let some
			$y_0 \in J$ and $a_0 \in I$ such that $F (a_0 ; y_0) - p = 0$. As $F$
			is real analytic and $\partial_y F (a ; y) = f (a ; y) \neq 0$, by
			{\cite[Theorem 6.1.2]{implicit}} the equation $F (a ; y) - p = 0$ has a real
			analytic solution $y = y (a)$ in a neighbourhood of $a_0$ such that $F (a_0
			; y (a_0)) - p = 0$. By uniqueness of the $p$-quantile this solution must be
			exactly $q (a)$, and hence $q$ is real analytic.
		\end{proof}		
		\section{Monotonicity and limits}				
		\begin{proof*}{Proof of Proposition \ref{prop1}}
			Fix $b>0$. As the regularised incomplete beta function $I (x ; a, b)$ is real
			analytic in $x$ and $a$, Lemma \ref{real} gives real analyticity of $q$. Let
			$\beta (a ; x) \assign x^{a - 1}  (1 - x)^{b - 1}$. Its logarithmic
			derivative wrt $a$ is			
			\begin{align*}
			\frac{\partial_a \beta (a, b ; x)}{\beta (a, b ; x)} = \frac{x^{a - 1}  (1
				- x)^{b - 1} \log x}{x^{a - 1}  (1 - x)^{b - 1}} = \log x
			\end{align*}			
			which is an increasing function of $x$ and Lemma \ref{lemmaq} gives us that
			$q$ is also increasing. Its limits at $0$ and $\infty$ are classical
			results. They can also be obtained by considering limits of the incomplete
			beta function and using Lemma \ref{lemmaasymp}. Let, for instance, some
			limit point $\lim_{n \rightarrow \infty} q (a_n) = q_{\infty} \in [0, 1]$
			for a sequence $a_n \rightarrow \infty$. Then, the fact that $\lim_{a
				\rightarrow \infty} I (x ; a, b)$ vanishes for $x\in [0,1)$ and is a unit at $x=1$ gives $q_{\infty} = 1$, hence $\lim_{a \rightarrow
				\infty} q (a) = 1$. A similar argument shows $\lim_{a \rightarrow 0} q (a) =
			0$.
		\end{proof*}
		
		\begin{proof*}{Proof of Proposition \ref{prop2}}
			By Proposition \ref{prop1} already, $\phi$ can be seen to be a real analytic
			function. Regarding monotonicity, if $b = 1$ then $\phi (a) \equiv - \log
			p$. Assume $b > 1$. By using a change of variables on (\ref{betamaineq}) we
			get
			\begin{equation}
			\int_{\phi (a)}^{\infty} e^{- s}  (1 - e^{- s / a})^{b - 1} \mathd s = p
			\int_0^{\infty} e^{- s}  (1 - e^{- s / a})^{b - 1} \mathd s \label{eqphi}
			\end{equation}
			and hence the function $\phi$ is the $(1 - p)$-quantile of the distribution
			with density function		
			\begin{align}
			x \mapsto \frac{e^{- x}  (1 - e^{- x / a})^{b - 1}}{\int_0^{+ \infty} e^{-
					s}  (1 - e^{- s / a})^{b - 1} \mathd s}
			\end{align}			
			We set $f (a ; x) \assign e^{- x}  (1 - e^{- x / a})^{b - 1}$. The
			logarithmic derivative of $f$ wrt $a$ is			
			\begin{align*}
			\frac{\partial_a f (a ; x)}{f (a ; x)} = - \frac{(b - 1) xe^{- x / a}}{a^2
				(1 - e^{- x / a})}
			\end{align*}			
			The derivative of this wrt $x$ is			
			\begin{align*}
			\partial_x \left( \frac{\partial_a f (a ; x)}{f (a ; x)} \right) = \frac{b
				- 1}{a^3} \mathrm{e}^{- \frac{x}{a}} \left( a \mathrm{e}^{- \frac{x}{a}} -
			a + x \right)  \left( - 1 + \mathrm{e}^{- \frac{x}{a}} \right)^{- 2} \geq
			0
			\end{align*}			
			as the function $x \mapsto a \mathrm{e}^{- \frac{x}{a}} - a + x$ has
			positive derivative for $x > 0$ and vanishes at $0$. Thus, by Lemma
			\ref{lemmaq} we have that $\phi$ is increasing. The case $b < 1$ is similar.
				
			For the asymptotic results, we notice that for $a \rightarrow 0$, we have
			that
			\[ \lim_{a \rightarrow 0} \frac{e^{- x}  (1 - e^{- x / a})^{b -
					1}}{\int_0^{\infty} e^{- s}  (1 - e^{- s / a})^{b - 1} \mathd s} =
			\frac{e^{- x}}{\int_0^{\infty} e^{- s} \mathd s} = e^{- x} \]
			The corresponding distributions, whose $p$-quantiles are equal to $\phi
			(a)$, converge to the gamma distribution with parameter $1$, and hence by
			Lemma \ref{lemmaasymp} $\lim_{a \rightarrow 0} \phi (a) = - \log p$.
			Similarly, for $a \rightarrow \infty$
			\[ \lim_{a \rightarrow \infty} \frac{e^{- x}  (1 - e^{- x / a})^{b -
					1}}{\int_0^{\infty} e^{- s}  (1 - e^{- s / a})^{b - 1} \mathd s} =
			\lim_{a \rightarrow \infty} \frac{e^{- x}}{\int_0^{\infty} e^{- s} 
				\frac{(1 - e^{- s / a})^{b - 1}}{(1 - e^{- x / a})^{b - 1}} \mathd s} =
			\frac{e^{- x} x^{b - 1}}{\int_0^{\infty} e^{- s} s^{b - 1} \mathd s} \]
			hence the distribution converges to the gamma distribution with parameter
			$b$ and $\lim_{a \rightarrow \infty} \phi (a) = \gamma_b$, the $(1 -
			p)$-quantile of the gamma distribution with parameter $b$.
		\end{proof*}
		
		\section{Convexity of $\phi$ for $b < 1$}		
		We rewrite (\ref{eqphi}) as
		\begin{equation}
		\int_0^{\phi (a)} e^{- s} (1 - e^{- s / a})^{b - 1} \mathd s = (1 - p)
		\int_0^{\infty} e^{- s} (1 - e^{- s / a})^{b - 1} \mathd s \label{phimaineq}
		\end{equation}
		We denote $f (a ; s) = e^{- s} (1 - e^{- s / a})^{b - 1}$ and
		differentiating the above equation we have
		\begin{equation}
		\phi' (a) f (a ; \phi (a)) + \int_0^{\phi (a)} \partial_1 f (a ; t) \mathd t
		= (1 - p) \int_0^{\infty} \partial_1 f (a ; t) \mathd t
		\end{equation}
		Differentiating again,
		\begin{align}
		\phi'' (a) f (a ; \phi (a))= & (1 - p) \int_{\phi (a)}^{\infty} \partial_1^2 f (a ; t) \mathd t - p \int_0^{\phi (a)} \partial_1^2 f (a ; t) \mathd t \nonumber\\
		&	
		- (\phi' (a))^2 \partial_2 f (a ; \phi (a)) - 2 \phi' (a) \partial_1 f (a ;
		\phi (a))   \label{eqsecder}
		\end{align}
		where $\partial_j, j \in \mathbbm{N}$, denotes differentiation wrt to the
		$j$th variable.\\

		\begin{proof*}{Proof of Theorem \ref{conv1}}
			Let $b \in (0, 1)$. By Proposition \ref{prop2} $\phi' < 0$, and as
			\[ \partial_s f (a ; s) = - e^{- s} (1 - e^{- s / a})^{b - 1} + \frac{b -
				1}{a} e^{- s} (1 - e^{- s / a})^{b - 2} e^{- s / a} < 0 \]
			and
			\[ \partial_a f (a ; s) = - s \frac{b - 1}{a^2} e^{- s} (1 - e^{- s / a})^{b
				- 2} > 0 \]
			we see that $\phi' (a)^2 \partial_2 f (a ; \phi (a)) < 0$ and $\phi'
			(a) \partial_1 f (a ; \phi (a)) < 0$. In order to show that $\phi'' > 0$,
			using (\ref{eqsecder}) what is left is to show that
			\begin{equation}
			(1 - p) \int_{\phi (a)}^{\infty} \partial_1^2 f (a ; t) \mathd t - p
			\int_0^{\phi (a)} \partial_1^2 f (a ; t) \mathd t \geqslant 0
			\label{eqd2f}
			\end{equation}
			
			We shall rewrite the above integrals in another way. We have
			\begin{align*}
			\int_{\phi (a)}^{\infty} \partial_1^2& f (a ; t) \mathd t=\\ = &
			\frac{b - 1}{a^4} \int_{\phi (a)}^{\infty} 2 t  e^{-\frac{2t}{a}} e^{- t}
			(1 - e^{- \frac{t}{a}})^{b - 3} \left(\left(a - \frac{t}{2}\right) e^{\frac{t}{a}} + \frac{(b - 1) t }{2} - a\right)
			\mathd t \nonumber\\
			= & \frac{2 (b - 1)}{a} \int_{\frac{\phi(a)}{a}}^{\infty} s e^{- a s} e^{- 2
				s} (1 - e^{- s})^{b - 3} \left( \left( 1 - \frac{s}{2} \right) e^s +
			\frac{b - 1}{2} s - 1 \right) \mathd s \\
			= & \frac{2 (b - 1)}{a} \int_{\frac{\phi(a)}{a}}^{\infty} e^{- a t} \eta_{}
			(t) \mathd t \nonumber
			\end{align*}
			where
			\begin{equation}\label{etadef}
			\eta_{_{}} (x) \assign x e^{- 2 x} (1 - e^{- x})^{b - 3} \left(  e^x- 1 -
			\frac{x}{2}  e^x + \frac{b - 1}{2} x  \right)
			\end{equation}
			and similarly
			\begin{align*}
				\int_0^{\phi (a)} \partial_1^2 f (a ; t) \mathd t = \frac{2 (b -
					1)}{a} \int_0^{\frac{\phi(a)}{a}} e^{- a t} \eta_{} (t) \mathd t
			\end{align*}
			Hence we can rewrite
			\begin{align}
			 (1 - p) \int_{\phi (a)}^{\infty}& \partial_1^2 f (a ; t) \mathd t - p
			\int_0^{\phi (a)} \partial_1^2 f (a ; t) \mathd t = \nonumber\\
			 &\frac{2 (b - 1)}{a} \left( (1 - p) \int_{\phi (a) / a}^{\infty} e^{-
				a t} \eta_{} (t) \mathd t - p \int_0^{\phi (a) / a} e^{- a t} \eta_{} (t)
			\mathd t \right)  \label{eqh}
			\end{align}
			
			We now proceed to show (\ref{eqd2f}). We see in Lemma \ref{lemmaeta} that
			the function
			\begin{equation}
			w (x) \assign \left( 1 - \frac{x}{2} \right) e^x + \frac{b - 1}{2} x - 1
			\label{www}
			\end{equation}
			has a unique root $\rho$ on $(0, + \infty)$, and it is positive on $(0,
			\rho)$ and negative on $(\rho, \infty)$. Assume that $\phi (a) \geqslant
			\rho a.$ As $w$ and $\eta$ have the same sign, we have that $\int_{\phi (a)
				/ a}^{\infty} e^{- a t} \eta_{} (t) \mathd t < 0$. For the other integral,
			we have
			\begin{align*}
				\int_0^{\phi (a) / a} e^{- a t} \eta_{} (t) \mathd t = & \int_0^{\rho}
				e^{- a t} \eta_{} (t) \mathd t + \int_{\rho}^{\phi (a) / a} e^{- a t}
				\eta_{} (t) \mathd t\\
				 \geq &  e^{- a \rho} \left( \int_0^{\rho} \eta_{} (t) \mathd t +
				\int_{\rho}^{\phi (a) / a} \eta_{} (t) \mathd t \right)\\
				 \geqslant & e^{- a \rho} \left( \int_0^{\rho} \eta_{} (t) \mathd t +
				\int_{\rho}^{\infty} \eta_{} (t) \mathd t \right) = e^{- a \rho}
				\int_0^{\infty} \eta_{} (t) \mathd t = 0
			\end{align*}
			by Lemma \ref{inth0}. Hence
			\[ \frac{2 (b - 1)}{a} \left( (1 - p) \int_{\phi (a) / a}^{\infty} e^{- a t}
			\eta_{} (t) \mathd t - p \int_0^{\phi (a) / a} e^{- a t} \eta_{} (t)
			\mathd t \right) \geqslant 0 \]
			and by (\ref{eqh}), (\ref{eqd2f}) is proved for $\phi (a) \geqslant \rho a.$
			
			Now, assume that $\phi (a) < \rho a$. We define
			\begin{equation}
			h (a ; t) \assign \frac{\partial_1^2 f (a ; t)}{(b - 1) f (a ; t)} =
			\frac{2 t ((a - t / 2) e^{t / a} + (b - 1) t / 2 - a)}{a^4 (e^{t / a} -
				1)^2}
			\end{equation}
			We further denote
			\begin{equation}
			h_0 (s) \assign \frac{a^2}{2} h (a ; a s) = \frac{s ((1 - s / 2) e^s + (b
				- 1) s / 2 - 1)}{(e^s - 1)^2} = \frac{s w (s)}{(e^s - 1)^2} \label{h0}
			\end{equation}
			By Lemma \ref{lemmaexppol}, $h_0$ is decreasing on $(0, \rho)$, hence $h (a
			; s)$ is also decreasing wrt $s$ on $(0, \rho a)$. Hence, for $t \in (0,
			\phi (a)) \subset (0, \rho a)$ we have $h (a ; t) > h (a ; \phi (a))$. For
			$t \in (\phi (a), \rho a)$, we analogously have $h (a ; \phi (a)) > h (a ;
			t)$, and if $t \in (\rho a, \infty)$, then $h (a ; \phi (a)) > 0 > h (a ;
			t)$. Hence,
			\begin{align*}
				  (1 - p) &\int_{\phi (a)}^{\infty} \partial_1^2 f (a ; t) \mathd t - p
				\int_0^{\phi (a)} \partial_1^2 f (a ; t) \mathd t=\\
				= & (b - 1) \left( (1 - p) \int_{\phi (a)}^{\infty} h (a ; t) f (a ; t)
				\mathd t - p \int_0^{\phi (a)} h (a ; t) f (a ; t) \mathd t \right)\\
				 \geqslant & (b - 1) h (a ; \phi (a)) \left( (1 - p) \int_{\phi
					(a)}^{\infty} f (a ; t) \mathd t - p \int_0^{\phi (a)} f (a ; t) \mathd t
				\right) = 0
			\end{align*}
			by (\ref{phimaineq}). Thus (\ref{eqd2f}) is proved. As the RHS of
			(\ref{eqsecder}) is positive, then $\phi'' > 0$.
		\end{proof*}
		
		\begin{lemma}
			\label{lemmaeta}Fix $b > 0$. The function $w$  in (\ref{www}) has a
			unique root $\rho$ on $(0, \infty)$. We have that $w (x) > 0$ for $x < \rho$
			and $w (x) < 0$ for $x > \rho$.
		\end{lemma}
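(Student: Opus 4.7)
The plan is a routine one-variable calculus analysis: I will show that $w$ starts at $0$, rises to a single positive maximum, and then decreases monotonically to $-\infty$. This shape forces a unique zero in $(0,\infty)$ with the sign pattern claimed in the lemma.

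First I would evaluate the boundary behaviour. At $x=0$, $w(0) = (1-0)e^{0} + 0 - 1 = 0$, which is not a root in $(0,\infty)$ but pins down the value from which $w$ leaves the origin. As $x\to\infty$, the exponential term $(1-x/2)e^x$ dominates the linear term $(b-1)x/2$ and the constant $-1$, so $w(x)\to -\infty$.

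Next I would study the derivatives. Straightforward differentiation gives
\[
w'(x) = \frac{1-x}{2}e^{x} + \frac{b-1}{2}, \qquad w''(x) = -\frac{x}{2}e^{x}.
\]
On $(0,\infty)$ we have $w''(x)<0$, so $w'$ is strictly decreasing. Moreover $w'(0) = \tfrac{1}{2} + \tfrac{b-1}{2} = \tfrac{b}{2} > 0$ (this is where the hypothesis $b>0$ enters) and $w'(x)\to -\infty$. Hence there is a unique $x_0 \in (0,\infty)$ with $w'(x_0)=0$, and $w$ is strictly increasing on $(0,x_0)$ and strictly decreasing on $(x_0,\infty)$.

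Combining these pieces finishes the argument. Since $w(0)=0$ and $w$ strictly increases on $(0,x_0)$, we have $w(x_0)>0$; since $w$ strictly decreases on $(x_0,\infty)$ from $w(x_0)>0$ down to $-\infty$, the intermediate value theorem yields exactly one $\rho \in (x_0,\infty)$ with $w(\rho)=0$. On $(0,\rho)$ the function $w$ is positive (on $(0,x_0]$ by strict increase from $0$, on $[x_0,\rho)$ by strict decrease from the positive maximum toward $0$), and on $(\rho,\infty)$ it is negative by continued strict decrease. There is no genuine obstacle here; the only mildly delicate point is making sure the sign of $w'(0)$ is correctly identified, which is what selects the parameter range $b>0$.
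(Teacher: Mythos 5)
Your proof is correct and follows essentially the same route as the paper's: compute $w'$ and $w''$, use $w''<0$ together with $w'(0)=b/2>0$ and $w'(x)\to-\infty$ to get the increase--then--decrease shape, and conclude from $w(0)=0$ and $w(x)\to-\infty$ that there is a unique root with the stated sign pattern. The only difference is that you spell out the intermediate value argument and the role of the critical point $x_0$ more explicitly, which the paper leaves implicit.
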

		
		\begin{proof}
			We have
			\[ w' (x) = \frac{1 - x}{2} e^x + \frac{b - 1}{2} \]
			and
			\[ w'' (x) = - \frac{x}{2} e^x < 0 \quad \tmop{for} x > 0 \]
			Hence $w'$ is strictly decreasing, and as $w' (0) = b / 2$ and $\lim_{x
				\rightarrow + \infty} w' (x) = - \infty$, it changes its sign exactly once
			and we get that $w$ is initially increasing and then decreasing, concave
			function. As $w (0) = 0$ and $\lim_{x \rightarrow + \infty} w (x) = -
			\infty$, we get that $w$ has a unique root $\rho \in (0, \infty)$, and $w
			(x) > 0$ for $x < \rho$ and $w (x) < 0$ for $x > \rho$.
		\end{proof}
		
		\begin{lemma}
			For $b > 0$, it holds that\label{inth0}
			\[ \int_0^{\infty} s e^{- 2 s} (1 - e^{- s})^{b - 3} \left( e^s -  1 -
			\frac{s}{2}  e^s + \frac{b - 1}{2} s  \right) \mathd s = 0 \]
		\end{lemma}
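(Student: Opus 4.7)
The plan is to exhibit an explicit antiderivative $G$ of the integrand that vanishes at both endpoints, so that by the fundamental theorem of calculus
\[ \int_0^{\infty} \eta(s) \, \mathd s = G(\infty) - G(0^+) = 0, \]
with $\eta$ as in (\ref{etadef}). My candidate is
\[ G(x) \assign \frac{x^2 e^{-x}}{2}(1 - e^{-x})^{b-2}. \]

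The motivation for this guess comes from the special case $b = 1$, where one can simplify directly to $\eta(x) = \frac{\mathd}{\mathd x}\!\left(\frac{x^2/2}{e^x - 1}\right)$, and rewriting $(e^x - 1)^{-1} = e^{-x}(1-e^{-x})^{-1}$ then suggests the one-parameter family above. Equivalently, one could start from an ansatz $G(x) = (A(x) + B(x) e^{-x})(1-e^{-x})^{b-2}$ and derive $A \equiv 0$, $B(x) = x^2/2$ by matching coefficients of $1$, $e^{-x}$ and $e^{-2x}$ in $G' = \eta$. The main obstacle is really only this initial guess; everything else is mechanical.

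From there, the first step is to verify $G'(x) = \eta(x)$ by a direct product/chain-rule computation. This produces three terms which, after pulling $(1-e^{-x})^{b-3}$ out as a common factor and regrouping, reassemble into $xe^{-2x}[(1-x/2)e^x + (b-1)x/2 - 1]$, matching (\ref{etadef}) exactly. A quick cross-check at $b = 2$, where $\eta(x)$ collapses to $x(1-x/2)e^{-x}$ and integrates to $0$ elementarily, is a useful sanity test.

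The second step is a short boundary analysis of $G$. As $x \to 0^+$, $(1-e^{-x})^{b-2} \sim x^{b-2}$ so $G(x) \sim x^b/2 \to 0$, using $b > 0$; as $x \to \infty$, $x^2 e^{-x} \to 0$ and $(1-e^{-x})^{b-2} \to 1$, so $G(x) \to 0$. Applying the fundamental theorem of calculus concludes the proof.
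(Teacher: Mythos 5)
Your proof is correct, and it takes a genuinely different --- and considerably more elementary --- route than the paper. Your antiderivative checks out: writing $e^{-x}(1-e^{-x})^{b-2} = e^{-2x}(e^x-1)(1-e^{-x})^{b-3}$, the product rule gives
\begin{equation*}
G'(x) = x e^{-2x}(1-e^{-x})^{b-3}\left( (e^x-1) - \tfrac{x}{2}(e^x-1) + \tfrac{b-2}{2}\,x \right) = \eta(x),
\end{equation*}
which is exactly the integrand, and the boundary limits $G(0^+)=G(+\infty)=0$ hold as you state (the limit at $0^+$ is precisely where $b>0$ is used, since $G(x)\sim x^b/2$ there). The paper instead splits the integral into three pieces $I_1,I_2,I_3$, evaluates each in terms of the digamma function and derivatives of the beta function (e.g.\ $I_1 = (\Psi(b)+\gamma)/(b-1)$), and checks that the three contributions cancel; that computation forces the auxiliary restriction $b\neq 1,2$, which must then be removed by a limiting argument. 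Your approach avoids special functions, the case distinction, and the limit entirely: once the antiderivative is guessed, verification is a one-line computation plus trivial asymptotics, valid uniformly for all $b>0$. What the paper's method buys is that it needs no inspiration --- it is a direct march through standard integral evaluations, and so also explains \emph{why} the constant is $0$ in terms of digamma identities --- whereas yours hinges on finding $G$; but since you verify the guess rigorously, that affects only discovery, not validity. One small point to make explicit in a final write-up: the integrand behaves like $\tfrac{b}{2}s^{b-1}$ near $0$ and decays exponentially at infinity, so the improper integral exists and applying the fundamental theorem of calculus on $[\epsilon,M]$ and letting $\epsilon\to 0$, $M\to\infty$ is legitimate.
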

		
		\begin{proof}
			In the course of the proof we assume that $b \neq 1, 2$, which may be lifted
			in the end by taking limits. We split the integral into 3 parts. The first
			one is
			\begin{align*}
				I_1 = & \int_0^{\infty} s e^{- 2 s} (1 - e^{- s})^{b - 3} (e^s - 1)
				\mathd s\\
				= & \int_0^{\infty} s e^{- s} (1 - e^{- s})^{b - 2} \mathd s\\
				= & - \int_0^{\infty} \log (1 - e^{- t}) e^{- (b - 1) t} \mathd t\\
				= & - \int_0^{\infty} \log (1 - e^{- t}) \left( \frac{1 - e^{- (b - 1)
						t}}{b - 1} \right)' \mathd t\\
				= & \frac{1}{b - 1} \int_0^{\infty} \frac{e^{- t} - e^{- b t}}{1 - e^{-
						t}} \mathd t\\
				= & \frac{\Psi (b) + \gamma}{b - 1}
			\end{align*}
			where $\Psi \assign \Gamma' / \Gamma$ is the digamma function (see \cite[Chapter 1]{special_functions}). For the
			second part,
			\begin{align*}
				I_2 = & \frac{b - 1}{2} \int_0^{\infty} s^2 e^{- 2 s} (1 - e^{- s})^{b -
					3} \mathd s\\
				= & \frac{b - 1}{2 (b - 2)} \left( \int_0^{\infty} s^2 e^{- s} (1 -
				e^{- s})^{b - 2} \mathd s - 2 \int_0^{\infty} s e^{- s} (1 - e^{- s})^{b -
					2} \mathd s \right)\\
				= & \frac{b - 1}{2 (b - 2)} \int_0^1 \log^2 t (1 - t)^{b - 2} \mathd t
				- \frac{\Psi (b) + \gamma}{b - 2}\\
				= & \frac{b - 1}{2 (b - 2)} \partial_1^2 \Beta (1, b - 1) - \frac{\Psi
					(b) + \gamma}{b - 2}
			\end{align*}
			using that $\partial_1^n \Beta (a, b) = \int_0^1 t^{a - 1} (1 - t)^{b - 1}
			\log^n t \mathd t$ for $b > - n$, which is derived by differentiating the
			integral representation of the beta function for $b > 0$ and using the
			identity principle. Finally,
			\begin{align*}
				I_3 = &  - \frac{1}{2} \int_0^{\infty} s^2 e^{- s} (1 - e^{- s})^{b - 3}
				\mathd s\\
				= &  - \frac{1}{2} \int_0^1 (\log t)^2 (1 - t)^{b - 3} \mathd t\\
				= &  - \frac{1}{2} \partial_1^2 \Beta (1, b - 2)\\
				= &  - \frac{1}{2} \partial_a^2 \left( \Beta (a, b - 1) \frac{a + b -
					2}{b - 2} \right) \bigg| _{a=1}\\
				= &  - \frac{b - 1}{2 (b - 2)} \partial_1^2 \Beta (1, b - 1) -
				\frac{\partial_1 \Beta (1, b - 1)}{b - 2}\\
				= &  - \frac{b - 1}{2 (b - 2)} \partial_1^2 \Beta (1, b - 1) +
				\frac{\gamma + \Psi (b)}{(b - 2) (b - 1)}
			\end{align*}
			where we have used that $\partial_1 \Beta (1, b - 1) = \frac{\gamma + \Psi
				(b)}{b - 1}$. We see that $I_1 + I_2 + I_3 = 0$, and the Lemma is proved.
		\end{proof}
		
		\begin{lemma}
			\label{lemmaexppol}Fix $b > 0$. The function $h_0$ in (\ref{h0}) is
			decreasing between $0$ and its root $\rho \in (0, \infty)$.
		\end{lemma}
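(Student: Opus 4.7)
The plan is to factor $h_0$ into a product of two functions that are individually easier to analyse, and then observe that on $(0,\rho)$ both factors are positive and strictly decreasing, so their product inherits that property. Specifically, I would write
\[
h_0(s) \;=\; \left(\frac{s}{e^s-1}\right)^{\!2}\cdot \frac{w(s)}{s},
\]
which is possible since $(e^s-1)^2 = s^2\cdot((e^s-1)/s)^2$. Both factors extend continuously to $s=0$ (with values $1$ and $b/2$ respectively, using $w(0)=0$ and $w'(0)=b/2$), consistent with the finite limit of $h_0$ at $0$.

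The first factor $s/(e^s-1)$ is classically known to be strictly positive and strictly decreasing on $(0,\infty)$. I would verify this quickly: its derivative equals $(e^s-1-se^s)/(e^s-1)^2$, whose numerator $e^s(1-s)-1$ vanishes at $0$ and has derivative $-se^s<0$, hence is negative on $(0,\infty)$. Squaring preserves positivity and strict decrease.

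For the second factor, set $v(s)\assign w(s)/s$. I would compute
\[
v'(s)=\frac{sw'(s)-w(s)}{s^2},
\]
and note that the numerator $\psi(s)\assign sw'(s)-w(s)$ satisfies $\psi(0)=0$ together with $\psi'(s)=sw''(s)=-\tfrac{s^2}{2}e^s<0$ for $s>0$ (using $w''(x)=-\tfrac{x}{2}e^x$ from the proof of Lemma \ref{lemmaeta}). Therefore $\psi(s)<0$ on $(0,\infty)$, so $v'(s)<0$ there. By Lemma \ref{lemmaeta}, $w(s)>0$ on $(0,\rho)$, so $v(s)>0$ on this interval as well.

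Combining, both factors are strictly positive and strictly decreasing on $(0,\rho)$, so $h_0$ is strictly decreasing there, which is the claim. The main obstacle, if any, is recognizing the right factorization: a frontal computation of $h_0'$ leaves a numerator mixing $(e^s-1)$, $e^s$, $w$, and $w'$ with no obvious sign, whereas pulling out the Bernoulli-type factor $s/(e^s-1)$ reduces the problem to the elementary monotonicity of $w(s)/s$, which in turn follows transparently from the concavity information $w''<0$ already established in Lemma \ref{lemmaeta}.
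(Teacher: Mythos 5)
Your proof is correct, but it uses a different factorization of $h_0$ than the paper does. Both arguments share the same skeleton: write $h_0$ as a product of factors that are positive and decreasing on $(0,\rho)$, one of which is (a power of) the classical decreasing function $s/(e^s-1)$. The paper splits $h_0(s) = \frac{s}{e^s-1}\cdot\frac{w(s)}{e^s-1}$ and handles the second factor through the algebraic identity
\[
\frac{w(s)}{e^s-1} \;=\; \frac{b}{2}\cdot\frac{s}{e^s-1} \;+\; 1 \;-\; \frac{1}{2}\cdot\frac{s(e^s+1)}{e^s-1},
\]
which then requires a further computation showing that $s(e^s+1)/(e^s-1)$ is increasing. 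You instead split $h_0(s) = \left(s/(e^s-1)\right)^2\cdot\left(w(s)/s\right)$ and dispose of the second factor by the standard fact that a concave function vanishing at the origin has decreasing difference quotient: $sw'(s)-w(s)$ vanishes at $0$ and has derivative $sw''(s) = -\tfrac{s^2}{2}e^s<0$. This reuses the concavity information $w''<0$ already established in Lemma \ref{lemmaeta} and avoids the paper's ad hoc identity, so your treatment of the second factor is arguably more transparent; the paper's version, in exchange, keeps every ingredient expressed in terms of the single kernel $s/(e^s-1)$ and needs no discussion of behaviour at $s=0$. Both yield strict decrease, which is what the application in Theorem \ref{conv1} actually uses. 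One cosmetic point: your auxiliary function $\psi(s) := sw'(s)-w(s)$ collides with the paper's global notation $\psi = -\log q$ from Section 5, so it should be renamed if this argument were inserted into the text.
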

		
		\begin{proof}
			It is easy to see that $x / (e^x - 1)$ is decreasing. The rest is also
			decreasing as
			\[ \frac{\left( 1 - \frac{x}{2} \right) e^x + \frac{b - 1}{2} x - 1}{e^x -
				1} = \frac{b}{2} \frac{x}{e^x - 1} + 1 - \frac{1}{2} \frac{x (e^x +
				1)}{e^x - 1} \]
			and
			\[ \left( \frac{x (e^x + 1)}{e^x - 1} \right)' = \frac{e^{2 \hspace{0.17em}
					x} - 2 \hspace{0.17em} e^x x - 1}{(e^x - 1)^2} \geqslant 0 \]
			as $\left( e^{2 \hspace{0.17em} x} - 2 \hspace{0.17em} e^x x - 1 \right)' =
			2 e^x (e^x - x - 1) \geqslant 0$ and the numerator vanishes at $0$. Hence,
			on $(0, \rho)$, $h_0$ is the product of two decreasing, positive functions,
			hence decreasing.
		\end{proof}
		
		\section{Logarithmic concavity of $q$}
		
		In this section, we shall prove Theorem \ref{logconc}. In order to have a more clear notation, we shall often
		denote the functions of $a$ ($q, \phi$ and $\psi$), without their argument.
		Using {\cite[8.17.7]{DLMF}}, we can rewrite (\ref{betamaineq}), as
		\begin{equation}
		\frac{q^a}{a} ^{} {}_2 F_1 (a, 1 - b ; a + 1 ; q) = p \frac{\Gamma (a)
			\Gamma (b)}{\Gamma (a + b)}
		\end{equation}
		and expanding the hypergeometric sum,
		\begin{equation}
		q^a \sum_{n = 0}^{\infty} \frac{(1 - b)_n q^n}{(a + n) n!} = p \Gamma (b)
		\frac{\Gamma (a)}{\Gamma (a + b)} \label{hyper1}
		\end{equation}
		Of course, if $b \in \mathbbm{N}$, the sum above terminates at $b - 1$, as $(1
		- b)_b = 0$. Using that
		\[ \frac{(1 - b)_n}{n!} = \frac{(b - 1) (b - 1 - 1) \cdots (b - 1 - n -
			1)}{n!} (- 1)^n = \binom{b - 1}{n} (- 1)^n \]
		and denoting
		\begin{equation}
		\psi \assign - \log q
		\end{equation}
		we can rewrite (\ref{hyper1}) further as
		\[ e^{- a \psi} \sum_{n = 0}^{\infty} \binom{b - 1}{n} \frac{(- 1)^n e^{- n
				\psi}}{(a + n)} = p \Gamma (b) \frac{\Gamma (a)}{\Gamma (a + b)} \]
		that is
		\begin{equation}
		\sum_{n = 0}^{\infty} \frac{\Gamma (a + b)}{\Gamma (a) (a + n)} \binom{b -
			1}{n} (- 1)^n e^{- (n + a) \psi} = p \Gamma (b) \label{eqpsi}
		\end{equation}
		We shall show that $\psi$ is convex, which shall imply the
		logarithmic concavity. The following lemma will be the key to this proof.
		
		\begin{lemma}
			We have that
			\begin{equation}
			\psi' = \sum_{n = 0}^{\infty} \frac{1}{a + b + n} Y_{n + b} (\psi) -
			\sum_{n = 0}^{\infty} \frac{1}{a + n} Y_n (\psi) \label{psi1}
			\end{equation}
			where

			\begin{equation}
			Y_c (\psi) \assign \frac{\int_0^{\psi} e^{c t} (1 - e^{- t})^{b - 1}
				\mathd t}{e^{c \psi} (1 - e^{- \psi})^{b - 1}}
			\end{equation}
		\end{lemma}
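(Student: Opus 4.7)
The plan is to differentiate the implicit equation for $\psi$ in $a$ and then re-express the resulting derivative in terms of the functions $Y_c$.

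First, I would rewrite (\ref{eqpsi}) in its equivalent integral form
\[
\int_{\psi(a)}^{\infty} e^{-at}(1-e^{-t})^{b-1}\,\mathd t = pB(a,b),
\]
which follows from (\ref{betamaineq}) via the substitution $t = e^{-s}$ together with $\psi = -\log q$. Differentiating both sides in $a$ (justified by Lemma \ref{lemmaderiv}) and using the standard identity $\partial_a B(a,b) = -B(a,b)[\Psi(a+b)-\Psi(a)]$ gives
\[
\psi'(a)\, e^{-a\psi}(1-e^{-\psi})^{b-1} = pB(a,b)\bigl[\Psi(a+b)-\Psi(a)\bigr] - \int_\psi^\infty s e^{-as}(1-e^{-s})^{b-1}\,\mathd s.
\]

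Next, I would use the series representation $\Psi(a+b)-\Psi(a) = \sum_{n=0}^\infty\bigl[\tfrac{1}{a+n}-\tfrac{1}{a+b+n}\bigr]$, together with the integral representation $pB(a,b) = \int_\psi^\infty e^{-as}(1-e^{-s})^{b-1}\,\mathd s$, and translate the integration variable via $s = t+\psi$. This decomposes each of the two pieces on the right-hand side into a sum over $n$, where each term carries either a $\tfrac{1}{a+n}$ or a $\tfrac{1}{a+b+n}$ factor.

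The matching identity to apply is
\[
Y_c(\psi)\, e^{-a\psi}(1-e^{-\psi})^{b-1} = e^{-(a+c)\psi}\int_0^\psi e^{ct}(1-e^{-t})^{b-1}\,\mathd t,
\]
which, combined with $\tfrac{1}{a+c} = \int_0^\infty e^{-(a+c)u}\,\mathd u$, shows how the terms of index $n$ in the digamma expansion pair with the corresponding pieces of $\int_\psi^\infty s e^{-as}(1-e^{-s})^{b-1}\,\mathd s$ to produce exactly $\bigl[\tfrac{Y_{n+b}}{a+b+n} - \tfrac{Y_n}{a+n}\bigr]\,e^{-a\psi}(1-e^{-\psi})^{b-1}$. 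Dividing by the positive factor $e^{-a\psi}(1-e^{-\psi})^{b-1}$ then yields the stated identity.

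The main obstacle is the bookkeeping in this last matching step: one must carefully verify which combinations produce which $Y_c$ contribution (tracking the shift by $b$ between the two series), and justify the interchange of summation and integration. Fubini applies here because $Y_c(\psi) = O(1/c)$ as $c \to \infty$ (by Laplace's method on $\int_0^\psi e^{ct}(1-e^{-t})^{b-1}\,\mathd t$), so each of the two series is absolutely convergent.
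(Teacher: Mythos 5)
Your setup is correct, and it is essentially the paper's own opening move transposed from series to integral form: the paper likewise differentiates the defining identity (there written as (\ref{eqpsi})) with respect to $a$ and expands $\Psi(a+b)-\Psi(a)=\sum_{n\geq 0}\bigl(\tfrac{1}{a+n}-\tfrac{1}{a+b+n}\bigr)$, and your differentiated equation for $\psi'$ is right. The gap is that the step you defer as ``bookkeeping'' is the entire mathematical content of the lemma, and it is not a term-by-term pairing. If you carry out your own plan --- use $\frac{e^{-(a+c)\psi}}{a+c}=\int_{\psi}^{\infty}e^{-(a+c)u}\,\mathd u$ to write
\[
\frac{Y_c(\psi)}{a+c}\,e^{-a\psi}(1-e^{-\psi})^{b-1}=\int_{\psi}^{\infty}\!\!\int_0^{\psi}e^{-au}\,e^{-c(u-t)}(1-e^{-t})^{b-1}\,\mathd t\,\mathd u,
\]
sum the resulting geometric series over $n$ for $c=n$ and $c=n+b$, and represent $p\Beta(a,b)\left[\Psi(a+b)-\Psi(a)\right]$ as a double integral via $\Psi(a+b)-\Psi(a)=\int_0^{\infty}e^{-au}\frac{1-e^{-bu}}{1-e^{-u}}\,\mathd u$ --- then, after Fubini and the substitution $w=s+u$, the identity you must verify reduces to
\[
\int_0^{w}(1-e^{-s})^{b-1}\,\frac{1-e^{-b(w-s)}}{1-e^{-(w-s)}}\,\mathd s \;=\; w\,(1-e^{-w})^{b-1},\qquad w>0.
\]
Nothing in your proposal proves this. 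It is exactly the integral avatar of the paper's Lemma \ref{leema1}, for which the paper needs Gauss's evaluation of ${}_2F_1$ at unity, the reflection formula for the digamma function, and two applications of l'H\^{o}pital's rule. Note also that the subtracted term $\int_{\psi}^{\infty}s\,e^{-as}(1-e^{-s})^{b-1}\,\mathd s$ has no natural decomposition indexed by $n$, so no pairing of individual terms can produce it; one needs a global resummation plus the identity above. Your proof therefore stops exactly where the real work begins. (The convergence remark $Y_c(\psi)=O(1/c)$ is correct but peripheral.)

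That said, your route can be completed, and rather cleanly: the left-hand side of the last display is the convolution on $(0,\infty)$ of $f(s)=(1-e^{-s})^{b-1}$ and $g(s)=\frac{1-e^{-bs}}{1-e^{-s}}$, whose Laplace transforms are $\Beta(a,b)$ and $\Psi(a+b)-\Psi(a)$ respectively; since the Laplace transform of $w\mapsto w f(w)$ is $-\partial_a\Beta(a,b)=\Beta(a,b)\left[\Psi(a+b)-\Psi(a)\right]$ as well, uniqueness of the Laplace transform for continuous functions gives the identity. With that lemma supplied (or with the paper's Lemma \ref{leema1} reproved), your argument becomes a complete and arguably more transparent alternative to the paper's lengthy rearrangement of double series; without it, the proposal has a hole at its centre.
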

		
		\begin{proof}
			Differentiating (\ref{eqpsi}) we get
			\begin{align*}
				0 = & \sum_{n = 0}^{\infty} \binom{b - 1}{n} (- 1)^n e^{- (n + a) \psi}
				\left[ \left( \frac{\Gamma (a + b)}{\Gamma (a) (a + n)} \right)' - (\psi +
				(n + a) \psi') \frac{\Gamma (a + b)}{\Gamma (a) (a + n)} \right]\\
				= & e^{- a \psi} \sum_{n = 0}^{\infty} \binom{b - 1}{n} (- 1)^n e^{- n
					\psi} \left[ \left( \frac{\Gamma (a + b)}{\Gamma (a) (a + n)} \right)' -
				\psi' \frac{\Gamma (a + b)}{\Gamma (a)} - \psi \frac{\Gamma (a +
					b)}{\Gamma (a) (a + n)} \right]
			\end{align*}
			Using the fact that $\sum_{n = 0}^{\infty} \binom{b - 1}{n} (- 1)^n e^{- n
				\psi} = (1 - e^{- \psi})^{b - 1}$, we get
			
			\begin{align*}
				 \psi' & (1 - e^{- \psi})^{b - 1} =\\
				= & \sum_{n = 0}^{\infty} \binom{b - 1}{n} (- 1)^n e^{- n \psi} \left(
				\left( \frac{\Gamma (a + b)}{\Gamma (a) (a + n)}
					\right)'\Bigg/\left(\frac{\Gamma (a + b)}{\Gamma (a)}\right) - \frac{\psi}{a + n}
				\right)\\
				= & \sum_{n = 0}^{\infty} \binom{b - 1}{n} (- 1)^n e^{- n \psi} \left(
				\frac{\Psi (a + b) - \Psi (a)}{a + n} - \frac{1}{(a + n)^2} -
				\frac{\psi}{a + n} \right)\\
				= & \sum_{n = 0}^{\infty} \binom{b - 1}{n} (- 1)^n e^{- n \psi} \left(
				\sum_{k = 0}^{\infty} \left( \frac{1}{k + a} - \frac{1}{k + a + b} \right)
				\frac{1}{a + n} - \frac{1}{(a + n)^2} - \frac{\psi}{a + n} \right)\\
				= & \sum_{n = 0}^{\infty} \binom{b - 1}{n} (- 1)^n e^{- n \psi}
				\times\\
				&  \left( \sum_{k \neq n} \left( \frac{1}{(k + a) (a + n)} -
				\frac{1}{(k + a + b) (a + n)} \right) - \frac{1}{(n + a + b) (a + n)} -
				\frac{\psi}{a + n} \right)\\
				= & \sum_{n = 0}^{\infty} \binom{b - 1}{n} (- 1)^n e^{- n \psi}\times\\
				&\sum_{k \neq n} \left( \left( \frac{1}{a + n} - \frac{1}{a +
					k} \right) \frac{1}{k - n} - \left( \frac{1}{a + n} - \frac{1}{a + k + b}
				\right) \frac{1}{k + b - n} \right)\\
				&  -\sum_{n = 0}^{\infty} \left( \binom{b - 1}{n} (- 1)^n e^{- n \psi} \left( \frac{1}{a + n} - \frac{1}{a
					+ n + b} \right) \frac{1}{b} - \frac{\psi}{a + n}\right) \\
				= & \sum_{n = 0}^{\infty} \frac{1}{a + n} \binom{b - 1}{n} (- 1)^n e^{-
					n \psi} \left( \sum_{k \neq n} \left( \frac{1}{k - n} - \frac{1}{k + b -
					n} \right) \right)
				\\ & - \sum_{n = 0}^{\infty} \frac{1}{a + n} \binom{b -
					1}{n} (- 1)^n e^{- n \psi} \psi +
				\sum_{n = 0}^{\infty} \binom{b - 1}{n} (- 1)^n e^{- n \psi} \times
				\\ & \left(
				\sum_{k \neq n} \left( \frac{1}{a + k + b} \frac{1}{k + b - n} -
				\frac{1}{a + k} \frac{1}{k - n} \right) + \left( \frac{1}{a + n + b} -
				\frac{1}{a + n} \right) \frac{1}{b} \right)\\
				= & \sum_{n = 0}^{\infty} \frac{1}{a + n} \binom{b - 1}{n} (- 1)^n e^{-
					n \psi} \left( \sum_{k \neq n} \left( \frac{1}{k - n} - \frac{1}{k + b -
					n} \right) \right)\\
				& + \sum_{n = 0}^{\infty} \left( \frac{1}{a + n + b} - \frac{1}{a + n}
				\right) \binom{b - 1}{n} (- 1)^n e^{- n \psi} \frac{1}{b} - \sum_{n =
					0}^{\infty} \frac{1}{a + n} \binom{b - 1}{n} (- 1)^n e^{- n \psi} \psi\\
				& + \sum_{n = 0}^{\infty} \sum_{k \neq n} \binom{b - 1}{n} (- 1)^n e^{-
					n \psi} \left( \frac{1}{a + k + b} \frac{1}{k + b - n} - \frac{1}{a + k}
				\frac{1}{k - n} \right)\\
				\end{align*}
				\begin{align*}
				= & \sum_{n = 0}^{\infty} \frac{1}{a + n} \binom{b - 1}{n} (- 1)^n e^{-
					n \psi} \left( \sum_{k \neq n} \left( \frac{1}{k - n} - \frac{1}{k + b -
					n} \right) \right)\\
				& +  \sum_{n = 0}^{\infty} \left( \frac{1}{a + n + b} - \frac{1}{a + n}
				\right) \binom{b - 1}{n} (- 1)^n e^{- n \psi} \frac{1}{b} - \sum_{n =
					0}^{\infty} \frac{1}{a + n} \binom{b - 1}{n} (- 1)^n e^{- n \psi} \psi \\
				& +  \sum_{n = 0}^{\infty} \sum_{k \neq n} \binom{b - 1}{k} (- 1)^k e^{-
					k \psi} \left( \frac{1}{a + n + b} \frac{1}{n + b - k} - \frac{1}{a + n}
				\frac{1}{n - k} \right)\\
				= & \sum_{n = 0}^{\infty} \frac{1}{a + n} \binom{b - 1}{n} (- 1)^n e^{-
					n \psi} \left( \sum_{k \neq n} \left( \frac{1}{k - n} - \frac{1}{k + b -
					n} \right) - \frac{1}{b} \right) \\
				& +  \sum_{n = 0}^{\infty} \frac{1}{a + n} \left( \sum_{k \neq n}
				\binom{b - 1}{k} \frac{(- 1)^k e^{- k \psi}}{k - n} - \binom{b - 1}{n} (-
				1)^n e^{- n \psi} \psi \right) \\
				&  + \sum_{n = 0}^{\infty} \frac{1}{a + n + b} \sum_{k \neq n} \binom{b -
					1}{k} \frac{(- 1)^k e^{- k \psi}}{n + b - k} + \sum_{n = 0}^{\infty}
				\frac{1}{a + n + b} \binom{b - 1}{n} (- 1)^n e^{- n \psi} \frac{1}{b}\\
				= & \sum_{n = 0}^{\infty} \frac{1}{a + n} \binom{b - 1}{n} (- 1)^n e^{-
					n \psi} \left( \sum_{k \neq n} \left( \frac{1}{k - n} - \frac{1}{k + b -
					n} \right) - \frac{1}{b} \right) \\
				& +  \sum_{n = 0}^{\infty} \frac{1}{a + n} \left( \sum_{k \neq n}
				\binom{b - 1}{k} \frac{(- 1)^k e^{- k \psi}}{k - n} - \binom{b - 1}{n} (-
				1)^n e^{- n \psi} \psi \right) \\
				& +  \sum_{n = 0}^{\infty} \frac{1}{a + n + b} \sum_{k = 0}^{\infty}
				\binom{b - 1}{k} \frac{(- 1)^k e^{- k \psi}}{n + b - k}
			\end{align*}
			Thus we have
			\begin{align*}
				\psi' = & \sum_{n = 0}^{\infty} \frac{1}{a + n} X_n (\psi) + \sum_{n =
					0}^{\infty} \frac{1}{a + b + n} Z_n (\psi)
			\end{align*}
			where
			\begin{align*}
				X_n &(\psi) \assign  \Bigg[ \binom{b - 1}{n} (- 1)^n e^{- n \psi} \left(
				\sum_{k \neq n} \left( \frac{1}{k - n} - \frac{1}{k + b - n} \right) -
				\frac{1}{b} \right)\\ & + \sum_{k \neq n} \binom{b - 1}{k} (- 1)^k e^{- k
					\psi} \frac{1}{k - n} - \binom{b - 1}{n} (- 1)^n e^{- n \psi} \psi \Bigg]
				/ (1 - e^{- \psi})^{b - 1}
			\end{align*}
			and
			\begin{align*}
				Z_n (\psi)  \assign  \left( \sum^{\infty}_{k = 0} \binom{b - 1}{k} (-
				1)^k e^{- k \psi} \frac{1}{n + b - k} \right) / (1 - e^{- \psi})^{b - 1}
			\end{align*}
			By Lemma \ref{leema1} and
			\[ \partial_{\psi} \left( \sum^{\infty}_{k = 0} \binom{b - 1}{k} (- 1)^k
			e^{(n + b - k) \psi} \frac{1}{n + b - k} \right) = e^{(n + b) \psi} (1 -
			e^{- \psi})^{b - 1} \]
			we have that
			\[ \sum^{\infty}_{k = 0} \binom{b - 1}{k} (- 1)^k e^{(n + b - k) \psi}
			\frac{1}{n + b - k} = \int_0^{\psi} e^{(n + b) t} (1 - e^{- t})^{b - 1}
			\mathd t \]
			and hence we get
			\begin{align*}
				Z_n (\psi) = & e^{- (n + b) \psi} \left( \sum^{\infty}_{k = 0} \binom{b
					- 1}{k} (- 1)^k e^{(n + b - k) \psi} \frac{1}{n + b - k} \right) / (1 -
				e^{- \psi})^{b - 1}\\
				= & e^{- (n + b) \psi} \int_0^{\psi} e^{(n + b) t} (1 - e^{- t})^{b -
					1} \mathd t / (1 - e^{- \psi})^{b - 1}\\
				= & \frac{\int_0^{\psi} e^{(n + b) t} (1 - e^{- t})^{b - 1} \mathd
					t}{e^{(n + b) \psi} (1 - e^{- \psi})^{b - 1}} = Y_{n + b} (\psi)
			\end{align*}
			Similarly, Lemma \ref{leema1} and
			\[ \partial_{\psi} \left( \sum_{k \neq n} \binom{b - 1}{k} (- 1)^k e^{- (k -
				n) \psi} \frac{1}{k - n} - \binom{b - 1}{n} (- 1)^n \psi \right) = - e^{n
				\psi} (1 - e^{- \psi})^{b - 1} \]
			give
			\begin{align*}
				X_n (\psi) = & - \frac{\int_0^{\psi} e^{n t} (1 - e^{- t})^{b - 1}
					\mathd t}{e^{n \psi} (1 - e^{- \psi})^{b - 1}} = - Y_n (\psi)
			\end{align*}
			hence (\ref{psi1}) is proved.
		\end{proof}
		
		\begin{lemma}
			For $n \in \mathbbm{N}$ and $b > 0$, we have\label{leema1}
			\begin{equation}
			\sum^{\infty}_{k = 0} \binom{b - 1}{k} \frac{(- 1)^k}{n + b - k} = 0
			\label{sum1}
			\end{equation}
			and
			\begin{equation}
			\sum_{k \neq n} \binom{b - 1}{k} (- 1)^k \frac{1}{k - n} = - \binom{b -
				1}{n} (- 1)^n \left( \sum_{k \neq n} \left( \frac{1}{k - n} - \frac{1}{k +
				b - n} \right) - \frac{1}{b} \right) \label{sum2}
			\end{equation}
		\end{lemma}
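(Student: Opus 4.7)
My plan is to derive both identities from the meromorphic identity
\[
\sum_{k=0}^{\infty}\binom{b-1}{k}(-1)^k\frac{1}{s+k}=\frac{\Gamma(s)\Gamma(b)}{\Gamma(s+b)},
\]
obtained for $s>0$ by expanding $(1-t)^{b-1}$ inside the beta integral $\Beta(s,b)=\int_0^1 t^{s-1}(1-t)^{b-1}\mathd t$ and integrating term-by-term, and extended to all complex $s$ by analytic continuation (both sides are meromorphic with the same simple poles at the non-positive integers).

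For (\ref{sum1}): substituting $s=-n-b$ yields
\[
\sum_{k=0}^{\infty}\binom{b-1}{k}(-1)^k\frac{1}{k-n-b}=\frac{\Gamma(-n-b)\Gamma(b)}{\Gamma(-n)}=0,
\]
since $1/\Gamma(-n)=0$ for $n\in\mathbbm{N}$; multiplying by $-1$ gives (\ref{sum1}). Equivalently, the sum equals $\frac{1}{n+b}\,{}_2F_1(1-b,-n-b;1-n-b;1)$, and Gauss's summation (applicable because $c-a-b=b>0$) immediately produces zero.

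For (\ref{sum2}): I match Laurent coefficients at $s=-n$. On the series side only the $k=n$ term contributes a simple pole; its residue is $\binom{b-1}{n}(-1)^n$ and the constant coefficient of the Laurent expansion is precisely the LHS of (\ref{sum2}). On the gamma side I use the standard expansions
\[
\Gamma(s)=\frac{(-1)^n}{n!(s+n)}+\frac{(-1)^n\Psi(n+1)}{n!}+O(s+n), \quad \frac{1}{\Gamma(s+b)}=\frac{1-\Psi(b-n)(s+n)+O((s+n)^2)}{\Gamma(b-n)},
\]
multiply through by $\Gamma(b)$, and read the constant term as $\binom{b-1}{n}(-1)^n[\Psi(n+1)-\Psi(b-n)]$. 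Matching gives
\[
\sum_{k\neq n}\binom{b-1}{k}(-1)^k\frac{1}{k-n}=\binom{b-1}{n}(-1)^n\bigl[\Psi(n+1)-\Psi(b-n)\bigr].
\]

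It then suffices to verify the elementary digamma identity
\[
\Psi(n+1)-\Psi(b-n)=\frac{1}{b}-\sum_{k\neq n}\Bigl(\frac{1}{k-n}-\frac{1}{k+b-n}\Bigr),
\]
which reconciles the above with the RHS of (\ref{sum2}). One splits $\sum_{k\neq n}$ into $0\le k\le n-1$ and $k\ge n+1$: the tail (via $j=k-n$) reduces to $\sum_{j\ge 1}(1/j-1/(j+b))=\Psi(b+1)+\gamma$, while the head is evaluated using $\Psi(b-n)=\Psi(b)-\sum_{j=1}^n 1/(b-j)$; assembling these with $\Psi(b+1)=\Psi(b)+1/b$ and $\Psi(n+1)=-\gamma+H_n$ closes the identity. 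The main obstacle is the bookkeeping of the Laurent expansion together with this index-heavy digamma manipulation; each step is elementary given standard gamma identities, and the integer-$b$ cases where $\Psi(b-n)$ would be singular follow by continuity from the generic case.
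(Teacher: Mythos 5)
Your proof is correct for non-integer $b$ (the same implicit restriction under which the paper's own argument operates), and it runs on the same engine: the meromorphic identity $\sum_{k\ge 0}\binom{b-1}{k}(-1)^k/(s+k)=\Gamma(s)\Gamma(b)/\Gamma(s+b)$, which the paper obtains from Gauss's ${}_2F_1$ summation at $1$ and you obtain from the beta integral. For (\ref{sum1}) the two arguments essentially coincide: evaluate at $s=-n-b$ and use $1/\Gamma(-n)=0$ (you substitute directly, the paper takes a limit $z\to n$). For (\ref{sum2}) your execution genuinely differs. The paper assembles the whole of (\ref{sum2}) into a single expression in a complex variable $z$, inserts the series $\Psi(b-z)-\Psi(-z)=\sum_{k\ge0}\bigl(\tfrac{1}{k-z}-\tfrac{1}{k+b-z}\bigr)$ together with the reflection formula, and evaluates $\lim_{z\to n}\bigl(\Gamma(b)\Gamma(-z)/\Gamma(b-z)-\binom{b-1}{n}(-1)^n/(n-z)\bigr)$ by a two-stage application of L'H\^{o}pital's rule. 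You instead match Laurent coefficients at the pole $s=-n$: the constant term on the series side is the LHS of (\ref{sum2}), while on the gamma side the standard expansions of $\Gamma$ at $-n$ and of $1/\Gamma$ at $b-n$ give $\binom{b-1}{n}(-1)^n[\Psi(n+1)-\Psi(b-n)]$ --- exactly the finite part the paper extracts by L'H\^{o}pital --- and you then pass to the paper's stated right-hand side via the elementary identity $\sum_{k\neq n}\bigl(\tfrac{1}{k-n}-\tfrac{1}{k+b-n}\bigr)=\Psi(b-n)-\Psi(n+1)+\tfrac{1}{b}$, whose head/tail verification you sketch correctly (I checked the bookkeeping). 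Your organisation is cleaner: textbook Laurent data replace the paper's lengthy L'H\^{o}pital computation, at the cost of one extra, routine digamma summation, which the paper instead absorbs into its limiting process.

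One caveat: your closing claim that the integer-$b$ cases ``follow by continuity'' should be dropped. Identity (\ref{sum1}) is actually false at positive integer $b$: for $b=1$ the sum collapses to the single term $1/(n+1)\neq 0$, and in general $\Gamma(-n-b)/\Gamma(-n)$ then has a finite \emph{nonzero} value, since the poles of numerator and denominator cancel against each other. This is a defect of the lemma as stated --- the paper's own limit $-\lim_{z\to n}\Gamma(b)\Gamma(-z-b)/\Gamma(-z)$ fails to vanish for integer $b$ in exactly the same way --- so it is not a gap you introduced; but no continuity argument can repair it, as the $b=1$ counterexample shows (the terminating sum at integer $b$ is not the limit of its non-integer neighbours, because the term $k=n+b$ degenerates).
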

		
		\begin{proof}
			For $z \in \mathbbm{C} \setminus \{ 0, - 1, - 2, \ldots \}$ we have,
			applying {\cite[Theorem 2.2.2]{special_functions}},
			\begin{align*}
			 \sum^{\infty}_{k = 0} \binom{b - 1}{k} \frac{(- 1)^k}{k + z} = &
			\frac{1}{z} \sum^{\infty}_{k = 0} \frac{(1 - b)_k (z)_k}{k! (z + 1)_k} =
			\frac{{}_2 F_1 (1 - b, z ; z + 1 ; 1)}{z} \\ = & \frac{\Gamma (z + 1) \Gamma
				(b)}{z \Gamma (z + b)} = \frac{\Gamma (z) \Gamma (b)}{\Gamma (z + b)} \end{align*}
			Hence, we get
			\[ \sum^{\infty}_{k = 0} \binom{b - 1}{k} \frac{(- 1)^k}{n + b - k} =
			\lim_{z \rightarrow n} \sum^{\infty}_{k = 0} \binom{b - 1}{k} \frac{(-
				1)^k}{z + b - k} = - \lim_{z \rightarrow n} \frac{\Gamma (b) \Gamma (- z
				- b)}{\Gamma (- z)} = 0 \]
			proving (\ref{sum1}). For (\ref{sum2}), assume $z \in \mathbbm{C} \setminus
			\mathbbm{N}$ and let
			\begin{align*}
				&   \binom{b - 1}{n} (- 1)^n \left( \sum_{k \neq n} \left( \frac{1}{k -
					z} - \frac{1}{k + b - z} \right) - \frac{1}{n + b - z} \right) + \sum_{k
					\neq n} \binom{b - 1}{k} \frac{(- 1)^k}{k - z}\\
				= & \binom{b - 1}{n} (- 1)^n \left( \sum^{\infty}_{k = 0} \left(
				\frac{1}{k - z} - \frac{1}{k + b - z} \right) - \frac{1}{n - z} \right) +
				\sum_{k = 0}^{\infty} \binom{b - 1}{k} \frac{(- 1)^k}{k - z} - \binom{b -
					1}{n} \frac{(- 1)^n}{n - z}\\
				= & \binom{b - 1}{n} (- 1)^n \left( \Psi (b - z) - \Psi (- z) -
				\frac{1}{n - z} \right) + \frac{\Gamma (b) \Gamma (- z)}{\Gamma (b - z)} -
				\binom{b - 1}{n} \frac{(- 1)^n}{n - z}\\
				= & \binom{b - 1}{n} (- 1)^n \left( \Psi (b - z) - \Psi (1 + z) - \pi
				\frac{\cos (\pi z)}{\sin (\pi z)} - \frac{1}{n - z} \right) + \frac{\Gamma
					(b) \Gamma (- z)}{\Gamma (b - z)} - \binom{b - 1}{n} \frac{(- 1)^n}{n -
					z}
			\end{align*}
			where we have used the reflection formula for the digamma function. We have,
			using that $\tmop{Res} \left( \frac{\pi \cos \pi z}{\sin \pi z}, n \right) =
			1$, that
			\[ \lim_{z \rightarrow n} \left( \Psi (b - z) - \Psi (1 + z) - \pi
			\frac{\cos (\pi z)}{\sin (\pi z)} - \frac{1}{n - z} \right) = \Psi (b -
			n) - \Psi (1 + n) \]
			Furthermore, using de L'H{\^o}pital's rule, we get
			\begin{align*}
				  \lim_{z \rightarrow n} &\left( \frac{\Gamma (b) \Gamma (- z)}{\Gamma
					(b - z)} - \binom{b - 1}{n} \frac{(- 1)^n}{n - z} \right) =\\
				= & \lim_{z \rightarrow n} \left( - \frac{\Gamma (b)}{\Gamma (b - z)
					\Gamma (1 + z)} \frac{\pi}{\sin \pi z} - \binom{b - 1}{n} \frac{(- 1)^n}{n
					- z} \right)\\
				= & \lim_{z \rightarrow n} \frac{\frac{\Gamma (b)}{\Gamma (b - z)
						\Gamma (1 + z)} (n - z) - \binom{b - 1}{n} (- 1)^n \frac{\sin \pi
						z}{\pi}}{\frac{\sin \pi z}{\pi} (n - z)}\\
				= & \lim_{z \rightarrow n} \frac{\left( \frac{\Gamma (b)}{\Gamma (b -
						z) \Gamma (1 + z)} \right)' (n - z) + \frac{\Gamma (b)}{\Gamma (b - z)
						\Gamma (1 + z)} + \binom{b - 1}{n} (- 1)^n \cos \pi z}{\frac{\sin \pi
						z}{\pi} - (n - z) \cos \pi z}\\
				= & \lim_{z \rightarrow n} \frac{\left( \frac{\Gamma (b)}{\Gamma (b -
						z) \Gamma (1 + z)} \right)'' (n - z) + \left( \frac{\Gamma (b)}{\Gamma (b
						- z) \Gamma (1 + z)} \right)' + \pi \binom{b - 1}{n} (- 1)^n \sin \pi
					z}{\cos \pi z + (n - z) \pi \sin \pi z + \cos \pi z}\\
				= & \frac{\frac{\Gamma (b)}{\Gamma (b - n) \Gamma (1 + n)} (\Psi (1 +
					n) - \Psi (b - n))}{2 \cos \pi n}\\
				= & (- 1)^n \binom{b - 1}{n} (\Psi (1 + n) - \Psi (b - n))
			\end{align*}
			hence getting (\ref{sum2}).
		\end{proof}
		
		\begin{lemma}
			Let $b > 1$ and $c > 0$. Then,\label{lemmaH} $Y_c$ is increasing on $(0,
			\infty)$. Moreover, $Y_c (x)$, $Y^{'}_c (x)$ are decreasing wrt $c$ for
			fixed $x$.
		\end{lemma}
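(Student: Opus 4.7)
The plan is to reduce everything to a single, more manageable integral representation obtained by the substitution $u=x-t$ in the numerator of $Y_c(x)$. This recasts the quotient as
$$Y_c(x)=\int_0^{x}e^{-cu}\left(\frac{1-e^{u-x}}{1-e^{-x}}\right)^{b-1}du,$$
in which the integrand is now bounded (between $0$ and $e^{-cu}$) on $(0,x)$ and vanishes at the endpoint $u=x$ since $b>1$. This identity will carry all four claims of the lemma.

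Monotonicity in $c$ is then immediate: the $c$-derivative of the integrand is dominated by $x$ on compact $c$-intervals, so differentiation under the integral yields
$$\partial_c Y_c(x)=-\int_0^{x} u\,e^{-cu}\left(\frac{1-e^{u-x}}{1-e^{-x}}\right)^{b-1}du<0,$$
proving the first half of the second statement. For monotonicity in $x$, I would apply Leibniz's rule: the boundary contribution at $u=x$ is $e^{-cx}\cdot 0^{b-1}=0$ (this is exactly where $b>1$ is needed), and the logarithmic $x$-derivative of $(1-e^{u-x})/(1-e^{-x})$ simplifies to
$$\frac{1}{e^{x-u}-1}-\frac{1}{e^x-1},$$
which is strictly positive for $u\in(0,x)$ because $x-u<x$. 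Consequently
$$Y_c'(x)=(b-1)\int_0^{x}e^{-cu}\left(\frac{1-e^{u-x}}{1-e^{-x}}\right)^{b-1}\left[\frac{1}{e^{x-u}-1}-\frac{1}{e^x-1}\right]du>0,$$
establishing that $Y_c$ is increasing. Finally, differentiating this last formula once more in $c$ pulls down a factor $-u$ while leaving everything else nonnegative, giving $\partial_c Y_c'(x)<0$, the remaining half of the second claim.

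The only genuinely delicate step is the Leibniz differentiation in $x$: near the endpoint, the $x$-derivative of the integrand behaves like $(x-u)^{b-2}$ (the factor $(1-e^{u-x})^{b-1}\sim(x-u)^{b-1}$ absorbs the singularity $1/(e^{x-u}-1)\sim 1/(x-u)$), which is locally integrable precisely because $b>1$. Once this dominated-convergence check is in place, every assertion of the lemma reduces to inspecting the sign of an elementary integrand, so no further analytic work is required.
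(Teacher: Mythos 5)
Your proof is correct and follows essentially the same route as the paper: the same substitution $u = x-t$ yielding the representation $Y_c(x)=\int_0^x e^{-cu}\bigl(\frac{1-e^{u-x}}{1-e^{-x}}\bigr)^{b-1}\,\mathd u$ (identical to the paper's $\int_0^x e^{-cv}\bigl(\frac{e^x-e^v}{e^x-1}\bigr)^{b-1}\mathd v$), followed by differentiation under the integral sign, with the sign of each integrand giving all the claims. Your explicit treatment of the vanishing boundary term and the integrability of the $(x-u)^{b-2}$ singularity is a welcome addition of rigour to a step the paper leaves implicit, but it is the same argument.
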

		
		\begin{proof}
			We rewrite
			\begin{align*}
				\frac{\int_0^x e^{c t} (1 - e^{- t})^{b - 1} \mathd t}{e^{c x} (1 - e^{-
						x})^{b - 1}} = & \int_0^x e^{c (t - x)} \left( \frac{1 - e^{- t}}{1 -
					e^{- x}} \right)^{b - 1} \mathd t\\
				= & \int_0^x e^{c (t - x)} \left( \frac{e^x - e^{x - t}}{e^x - 1}
				\right)^{b - 1} \mathd t\\
				= & \int_0^x e^{- c v} \left( \frac{e^x - e^v}{e^x - 1} \right)^{b - 1}
				\mathd v
			\end{align*}
			Differentiating, we get
			\begin{align*}
				\Bigg( \int_0^x e^{- c v} \left( \frac{e^x - e^v}{e^x - 1} \right)^{b - 1}
				\mathd t \Bigg)'=& \int_0^x e^{- c v} \partial_x \left( \frac{e^x -
					e^v}{e^x - 1} \right)^{b - 1} \mathd v \\
				= & \int_0^x e^{- c v + x} (b - 1) \left( \frac{e^x - e^v}{e^x - 1}
				\right)^{b - 2}  \frac{e^v - 1}{(e^x - 1)^2}  \mathd v
			\end{align*}
			and this completes the proof.
		\end{proof}
		
		\begin{proof*}{Proof of Theorem \ref{logconc}}
			We shall show the convexity of $\psi = - \log q$, which is equivalent to logarithmic
			concavity of $q$. The case $b < 1$ is given by Theorem \ref{conv1}, as $a
			\psi'' = \phi'' - 2 \psi' > 0$. For $b = 1$, we have $\psi = \frac{\log (1 /
				p)}{a}$ hence $\psi'' = 0$. For $b > 1$, differentiating (\ref{psi1}) we get
			\begin{align*}
				\psi'' = & \sum_{n = 0}^{\infty} \frac{1}{(a + n)^2} Y_n (\psi) -
				\sum_{n = 0}^{\infty} \frac{1}{(a + b + n)^2} Y_{n + b} (\psi) +\\
				&  \left( \sum_{n = 0}^{\infty} \frac{1}{a + b + n} Y'_{n + b} (\psi) -
				\sum_{n = 0}^{\infty} \frac{1}{a + n} Y'_n (\psi) \right) \psi' > 0
			\end{align*}
			using that $\psi' < 0$ and Lemma \ref{lemmaH}.
		\end{proof*}
		
		\begin{remark}
			We notice that (\ref{psi1}) also gives
			\begin{equation}
			q' = \sum_{n = 0}^{\infty} \frac{1}{a + b + n} \frac{\int_q^1 t^{- n
					- b - 1} (1 - t)^{b - 1} \mathd t}{q^{- n - b - 1} (1 - q)^{b - 1}} -
			\sum_{n = 0}^{\infty} \frac{1}{a + n} \frac{\int_q^1 t^{- n - 1} (1 -
				t)^{b - 1} \mathd t}{q^{- n - 1} (1 - q)^{b - 1}}
			\end{equation}
		\end{remark}
		
		\appendix\section*{Appendix}
		
		Finally, we want to see how the $p$-quantile depends on the second parameter
		of the beta distribution. For clarity, from now on we denote the $p$-quantile
		of the beta distribution with parameters $a$ and $b$ by $q_p (a, b)$. We shall
		consider $a$ constant, and try to relate $q$ as a function of $b$ with the
		previous results.	
			
		A simple change of variables $s = 1 - t$ gives the functional relation
		
		\begin{align}
		I (x ; a, b) = 1 - I (1 - x ; b, a)
		\end{align}		
		which implies
		\[ p = I (q_p (a, b) ; a, b) = 1 - I (q_p (a, b) ; b, a) \Rightarrow I (q_p
		(a, b) ; b, a) = 1 - p = I (q_{1 - p} (a, b) ; b, a) \]
		and using the uniqueness of the $p$-quantile we get
		
		\begin{align}
		q_p (a, b) = 1 - q_{1 - p}  (b, a)
		\end{align}		
		Hence, by Proposition \ref{prop1}, we get that $q_p$ is decreasing in $b$ and		
		\begin{align*}
		\lim_{b \rightarrow 0} q_p (a, b) = 1
		\end{align*}		
		\begin{align*}
		\lim_{b \rightarrow \infty} q_p (a, b) = 0
		\end{align*}		
		Moreover, we have		
		\begin{align}
		(1 - q_p (a, b))^b = q_{1 - p}  (b, a)^b = e^{- \varphi_{1 - p} (b)}
		\end{align}		
		where $\varphi_{1 - p} (b) = - b \log q_{1 - p} (b, a)$, hence the behaviour of
		$q_p (a, b)$ as a function of $b$ can again be studied similarly through the
		function $\varphi_p$. We also easily see that $b \mapsto 1 - q_p (a, b)$ is
		log-concave. We remark that numerical evidence shows that $b \mapsto q_p (a,
		b)$ itself is not (log-)concave/convex. However, the function $b \mapsto
		\varphi_p (b)$ seems to be convex.		
		\section*{}
		\begin{acknowledgements}I would like to thank H.L.Pedersen for careful reading of the original manuscript and useful suggestions.
			\end{acknowledgements}
		
\bibliographystyle{plain}
\bibliography{betabib}

\end{document}